\newcommand{\expn}{\operatorname{e}}
\newcommand{\im}{\operatorname{im}}
\newcommand{\diag}{\operatorname{diag}}
\newcommand{\beq}{\begin{equation}}
\newcommand{\eeq}{\end{equation}}
\newcommand {\mat}      [1] {\left[\begin{array}{#1}}
\newcommand {\rix}          {\end{array}\right]}
\newcommand {\smat}      [1] {\left[\begin{smallmatrix}{#1}}
\newcommand {\srix}          {\end{smallmatrix}\right]}
\newcommand {\s}      [1] {\begin{smallmatrix}{#1}}
\newcommand {\se}          {\end{smallmatrix}}
\newcommand{\trace}{\operatorname{tr}}
\newtheorem{defn}{Definition}[section]
\newtheorem{remark}[defn]{Remark}
\newtheorem{lem}[defn]{Lemma}
\newtheorem{prop}[defn]{Proposition} %Proposition entspricht Satz
\newtheorem{thm}[defn]{Theorem}
\title{Dimension reduction for large-scale stochastic systems with non-zero initial states and controlled diffusion}
\author{Martin Redmann\thanks{Martin Luther University Halle-Wittenberg, Institute of Mathematics, Theodor-Lieser-Str. 5, 06120 Halle (Saale), Germany, Email: {\tt 
martin.redmann@mathematik.uni-halle.de}.}
}
\begin{document}

\maketitle

\begin{abstract}
In this paper, we establish new strategies to reduce the dimension of large-scale controlled stochastic differential equations with non-zero initial states. The first approach transforms the original setting into a stochastic system with zero initial states. This transformation naturally leads to equations with controlled diffusion. A detailed analysis of dominant subspaces and bounds for the reduction error is provided in this controlled diffusion framework. Subsequently, we introduce a reduced system for the original framework and prove an a-priori error bound for the first ansatz. This bound involves so-called Hankel singular values that are linked to a new pair of Gramians. A second strategy is presented that is based on the idea of reducing control and initial state dynamics separately. Here, different Gramians are used in order to derive a reduced model and their relation to dominant subspaces are pointed out. We also show an a posteriori error bound for the second approach involving two types of Hankel singular values.
\end{abstract}

\textbf{Keywords:} model order reduction$\cdot$ stochastic systems $\cdot$ controlled diffusion  $\cdot$ non-zero initial states $\cdot$ Gramians $\cdot$ matrix (in)equalities $\cdot$ error bounds

\noindent\textbf{MSC classification:} 60H10 $\cdot$ 60H35 $\cdot$  60J65 $\cdot$  65C30   $\cdot$ 68Q25 $\cdot$   93E03

\pagestyle{myheadings}
\thispagestyle{plain}
\markboth{M. Redmann}{Dimension reduction for SDEs with non-zero initial states and controlled diffusion}

%%%%%%%%%%%%%%%%%%%%%%

\section{Introduction}

The analysis and control of large-scale stochastic systems is a significant area of research due to their wide applicability in engineering, finance, biology, and other fields. These systems are often described by controlled stochastic differential equations (SDEs) which can be computationally demanding to solve, particularly when the system size is large. Dimension reduction techniques offer a viable solution to mitigate this complexity by approximating high-dimensional systems with lower-dimensional counterparts while preserving essential system  characteristics.

Traditional system-theoretical dimension reduction methods, such as  balanced truncation \cite{beckerhartmann, bennerdammcruz,  redmannbenner, moo1981} and the iterative rational Krylov algorithm \cite{irka, mliopt}, are well-established for both deterministic and stochastic systems with zero initial states. However, many practical systems operate under non-zero initial conditions, making these classical approaches less effective. For linear systems of ordinary differential equation, there have been several approaches to overcome this issue \cite{baur_benner_feng, inhom_initial, inhom_initial_reis, matze_schroeder}. Having an initial state $x_0\neq 0$ in a stochastic setting causes an enormous increase in complexity as millions of system evaluation might be required for each single $x_0$ (e.g. in a Monte-Carlo simulation). For that reason, first techniques for SDEs with non-zero initial data have been investigated \cite{becker_hartmann_redmann_lorenz, igor_martin}. However, these approaches come with error bounds either depending on the terminal time $T$ (exploding as $T\to \infty$) or on singular values of the so-called error system being practically less useful. This paper addresses a gap by developing new and very general strategies with  efficient error bound for dimension reduction tailored to large-scale stochastic systems with non-zero initial states.\smallskip

Our first approach involves the transformation of the original stochastic system with non-zero initial states into an equivalent system with zero initial states. This transformation aims to simplify the analysis and reduction process. However, it leads to SDEs with controlled diffusion terms, a setting, where no model reduction method is available so far. Within this framework, we conduct a detailed analysis of the dominant subspaces and provide bounds for the reduction error based on a new pair of Gramians. These bounds are crucial for ensuring that the reduced system accurately captures the behavior of the original system. One of the main results is an a-priori error bound that involves the truncated Hankel singular values, linked to the novel pair of Gramians, associated to the transformed system with controlled diffusion.  Besides the result on the error bound, the advantage of our first approach is that it is very flexible and general. It, e.g., covers the two works \cite{baur_benner_feng, matze_schroeder} at the same time when setting the diffusion equal to zero (deterministic case). The costs of the first approach are the computation of the newly proposed Gramians being solutions of linear matrix inequalities. Solving such inequalities in high dimensions is very challenging which can be seen as a drawback as long as no efficient solver is available. \smallskip

Secondly, we propose to separate the control and initial state dynamics within the dimension reduction procedure. By treating these components individually, we employ two structurally different reachability Gramians to derive a reduced model. This method leverages the relationship between these Gramians and the dominant subspaces, allowing for a more nuanced reduction approach. Additionally, we establish an a posteriori error bound for this second method, providing further insight into the accuracy of the reduced model. This error bound involves the two different Hankel singular values of the control and initial state dynamics, demonstrating their critical role in both reduction procedures. The benefit of this approach is the flexibility of approximating the two underlying dynamics, with possibly different reduction potential, by reduced order systems not having the same dimensions. On the other hand, it might be computationally more involved as two pairs of Gramians have to be determined. Let us also mention that the second ansatz can be viewed as a generalization of the work on model order reduction for deterministic systems in \cite{inhom_initial}.\smallskip

Overall, our work contributes to the field of efficiently solving large-scale stochastic systems with non-zero initial states. In detail, Section \ref{stochstabgen} introduces the stochastic setting with controlled drift and diffusion as well as a general initial condition. It sketches the two strategies mentioned above and delivers the required transformation of the original equations into a system with zero initial states. Section \ref{Sec3} provides a comprehensive theory on Gramian-based model reduction for such systems received after the transformation. The main contributions are the characterization of dominant subspace based a new pair of Gramians and an a-priori error bound. The same theory is established for uncontrolled systems with non-zero initial data in Section \ref{sec4}. The results of Sections \ref{Sec3} and \ref{sec4} are the basis for the theory of dimension reduction for the original system. It is provided in Section \ref{sec5} showing effective error bounds for our two approaches based on various truncated Hankel singular values.

\section{Setting, notation and goal}\label{stochstabgen}

Let $\left(\Omega, \mathcal F, (\mathcal F_t)_{t\in [0, T]}, \mathbb P\right)$\footnote{$(\mathcal F_t)_{t\in [0, T]}$ is right continuous and complete.} be a filtered probability space on which every stochastic process appearing in this paper is defined. Given an $\mathbb R^q$-valued Wiener process $w=\begin{bmatrix}w_1 & \ldots & w_q\end{bmatrix}^\top$, we assume that it is $(\mathcal F_t)_{t\in [0, T]}$-adapted and its 
increments $w(t+h)-w(t)$ are independent of $\mathcal F_t$ for $t, h\geq 0$ and $t+h\leq T$. We denote its covariance matrix by $K=\left(k_{ij}\right)_{i, j=1, \ldots, d}$, so that $\mathbb E[w(t)w(t)^\top]=K t$ holds.
We study the following large-scale  stochastic system with a non-zero initial state as well as a controlled drift and diffusion:
 \begin{subequations}\label{original_system}
\begin{align}\label{stochstatenew}
             dx(t)&=[Ax(t)+Bu(t)]dt+\sum_{i=1}^q [N_i x(t)+M_i u(t)]dw_i(t),\quad x(0)=x_0=X_0 v,\\ \label{output_eq}
            y(t) &= Cx(t)+D u(t),\quad t\in [0, T],
\end{align}
\end{subequations}
where $A, N_i\in \mathbb R^{n\times n}$, $B, M_i\in \mathbb R^{n\times m}$, $C\in \mathbb R^{p\times n}$, $D\in \mathbb R^{p\times 
m}$, $X_0\in \mathbb R^{n\times 
d}$ and $v\in \mathbb R^d$ is a generic vector. The image of the matrix $X_0$ represents the space of initial conditions that we consider. Moreover, the $(\mathcal F_t)_{t\in[0, T]}$-adapted stochastic control $u$ takes values in $\mathbb R^m$ and satisfies $\|u\|_{L^2_T}^2:=\mathbb E \int_0^T \|u(t)\|_2^2 dt<\infty$, where $\|\cdot\|_2$ denotes the Euclidean norm. We assume that \eqref{stochstatenew} is mean square asymptotically stable for the rest of the paper. That is $\mathbb E \|x(t)\|_2^2\to 0$ for $u\equiv 0$ and any $x_0\in\mathbb R^n$, see, e.g., \cite{staboriginal, mao}. We introduce an auxiliary variable $\tilde x$ associated to \eqref{stochstatenew} defined by the following system
 \begin{subequations}\label{aux_var}
 \begin{align}\label{aux_var_state}
             d\tilde x(t) &= \tilde A\tilde x(t)dt+\sum_{i=1}^q \tilde N_i \tilde x(t)dw_i(t),\quad \tilde x(0)=x_0=X_0 v,\\ \label{aux_var_out}
             \tilde y(t)&= C \tilde x(t).
\end{align}
 \end{subequations}
Here,  $\tilde A, \tilde N_i\in \mathbb R^{n\times n}$ are matrices that are arbitrary and that are chosen according to a desired ansatz. In fact, \eqref{aux_var} is an uncontrolled version of \eqref{original_system}, but mean square asymptotic stability is not necessarily assumed. However, both systems share the same initial state and output matrix $C$. We can now introduce a variable $z(t): = x(t) - \tilde x(t)$ having a zero initial condition. The corresponding system is
 \begin{subequations}\label{original_system_z}
\begin{align}\label{stochstatenew_z}
             dz(t)&=\left[A z(t)+\big[\begin{matrix} B & A-\tilde A \end{matrix}\big] \smat u(t)\\ \tilde x(t)\srix \right]dt+\sum_{i=1}^q \left[N_i z(t)+\big[\begin{matrix} M_i & N_i-\tilde N_i \end{matrix}\big] \smat u(t)\\ \tilde x(t)\srix \right]dw_i(t),\\ \label{output_eq_z}
            y(t) &= Cz(t)+ \big[\begin{matrix} D & C \end{matrix}\big] \smat u(t)\\ \tilde x(t)\srix=Cz(t)+D u(t) + \tilde y(t),
\end{align}
\end{subequations}
with $z(0)=0$ and $t\in [0, T]$. In fact, we aim to follow two approaches using the auxiliary variable $\tilde x$ in different ways. These are discussed in the next two subsections.
\subsection{First approach}\label{approach1}
 Given $X_0$, the matrices $\tilde A$, $\tilde N_1, \dots, \tilde N_q$ in \eqref{aux_var} are chosen, so that the auxiliary variable $\tilde x$ takes values in a (low-dimensional) subspace $\im[V_0]$ for each $x_0\in \im[X_0]$, where $V_0\in \mathbb R^{n\times r_0}$ is a full rank matrix with $r_0\leq n$. In addition, we assume $V_0$ to be given explicitly without computational cost. We can now represent $\tilde x(t) = V_0 u_0(t) v$ for some suitable $\mathbb R^{r_0\times d}$-valued process $u_0$ which we assume to be easily accessible. Hence, we interpret $u_0 v$ to be another externally given control process. Inserting this into \eqref{original_system_z} yields
  \begin{subequations}\label{structured_changed_system}
\begin{align}
             dz(t)&=\left[A z(t)+\tilde  B \tilde u(t) \right]dt+\sum_{i=1}^q \left[N_i z(t)+\tilde M_i \tilde u(t) \right] dw_i(t),\quad z(0)=0,\\ 
            y(t) &= Cz(t)+ \tilde D \tilde u(t),\quad t\in [0, T],
\end{align}
\end{subequations}
 where we define 
 \begin{align*}
 (\tilde B, \tilde M_i, \tilde D, \tilde u(t)):=\Big(\big[\begin{matrix} B & (A-\tilde A)V_0 \end{matrix}\big], \big[\begin{matrix} M_i & (N_i-\tilde N_i)V_0 \end{matrix}\big],  \big[\begin{matrix} D & C V_0 \end{matrix}\big], \smat u(t)\\ u_0(t)v\srix\Big).                                                                                                                                                                                                                                                                                                                                                                                                                                                                                                                             \end{align*}
Consequently, \eqref{original_system_z} has become a system of the form \eqref{original_system}, with a zero initial state, updated control matrices $\tilde B, \tilde M_i, \tilde D$ and new control $\tilde u$. The benefit of this first approach is that we save the step of reducing the dynamics corresponding to the initial state. This is due to the flexibility of choosing $\tilde A$ and $\tilde N_i$ which is used to find a low-dimensional underlying structure without computational effort. However, the input space is enlarged  in comparison to the original system, since we include the information regarding the initial state there. Notice that we have  $\tilde x(t) = \tilde \Phi(t) x_0$, where $\tilde \Phi$ is the fundamental matrix. In case all matrices $\tilde A$ and $\tilde N_i$ commute it is $\tilde \Phi(t)=\expn^{(\tilde A- 0.5 \sum_{i, j=1}^q \tilde N_i \tilde N_j k_{ij})t +\sum_{i=1}^q \tilde N_i w_i(t)}$. If we now set $\tilde A= -\alpha I$ and $\tilde N_i= \gamma_i I$ for $\alpha>0$ and $\gamma_i\in \mathbb R$, we obtain $\tilde x(t) = \expn^{(-\alpha- 0.5 \sum_{i, j=1}^q \gamma_i \gamma_j k_{ij})t +\sum_{i=1}^q \gamma_i w_i(t)}x_0$. Since $x_0=X_0 v$, we obtain that $V_0=X_0$ and $u_0(t)= \expn^{(-\alpha- 0.5 \sum_{i, j=1}^q \gamma_i \gamma_j k_{ij})t +\sum_{i=1}^q \gamma_i w_i(t)} I_{d\times d}$. Setting $\gamma_i=0$ for all $i=1, \dots, q$ leads to an approach for ordinary differential equations that was studied in \cite{matze_schroeder}. The most trivial choice is $\tilde A=\tilde N_i=0$ leading to $\tilde x(t)=x_0=X_0 v$. Hence,  $V_0=X_0$ and $u_0\equiv I_{d\times d}$. Such a scenario was studied in the deterministic case in \cite{baur_benner_feng}.  However, this results in a non-stable system \eqref{aux_var} having a few disadvantages, e.g., an error estimate increasing in time. Certainly, we can also allow for using $V_0=I$ and $u_0(t) = \tilde \Phi(t)X_0$ if $\tilde \Phi$ is explicitly available making $u_0 v \equiv \tilde x$ the additional control variable. However, this leads to a large number of inputs, so that the system might be hard to reduce.
\begin{remark}
The structure of \eqref{original_system} is very natural to consider when following the ansatz of this subsection. Even though we might start with an uncontrolled system \eqref{original_system}, we have a controlled system in \eqref{structured_changed_system} which means that we potentially encounter a change in the structure. The  main challenge is to develop a model reduction scheme for \eqref{structured_changed_system} or equivalently \eqref{original_system} with $x(0)=0$, since these systems have an input term in the diffusion. So far, there is no dimension reduction technique available for such a setting.
\end{remark}

\subsection{Second approach}\label{approach2}
We set $\tilde A=A$ and $\tilde N_i=N_i$ meaning that $\tilde x$ as well as its solution space are not explicitly given in general. In this case, \eqref{original_system_z} is a system like in \eqref{original_system} having a zero initial state, but the output equation \eqref{output_eq_z} contains another variable $\tilde y$ that, in contrast to \eqref{structured_changed_system}, is not known a-priori:
 \begin{subequations}\label{sys_eq_z2}
\begin{align}\label{state_eq_z2}
             dz(t)&=[Az(t)+Bu(t)]dt+\sum_{i=1}^q [N_i z(t)+M_i u(t)]dw_i(t),\quad z(0)=0,\\ \label{output_eq_z2}
            y(t)-\tilde y(t) &= Cz(t)+D u(t),\quad t\in [0, T].
\end{align}
\end{subequations}
Now, computing $y$ requires  further effort and solving for $\tilde y$ is expensive. Therefore, we approximate $\tilde y$ by applying model order reduction to \eqref{aux_var}.
\begin{remark}
Following the ansatz of this subsection, we have to apply model reduction twice. The first step is to reduce the control dynamics. This is system \eqref{sys_eq_z2} or equivalently system \eqref{original_system} with $x(0)=0$. Subsequently, dimension reduction is applied to \eqref{aux_var} in order approximate $\tilde y$ entering \eqref{output_eq_z2}. This strategy can be viewed as a generalization of the work in \cite{inhom_initial} to stochastic differential equations.
\end{remark}

\section{Model reduction for \eqref{original_system} with $x_0=0$}\label{Sec3}

This section is the foundation for the approaches described in Subsections \ref{approach1} and \ref{approach2}. Throughout this section, we assume that $x_0=0$ in  \eqref{original_system}. 

\subsection{Gramians and dominant subspaces}

We begin with identifying state directions that are of low relevance in \eqref{stochstatenew} while assuming that $x_0=0$ below. For that reason, we define a Gramian based on the following operators: \begin{align*}
                                                                                                                                                                                                                                                                L(X)&:= A^\top X + X A + \sum_{i, j=1}^q N_i^\top X N_j k_{ij},\\                                                                                                                                                                                                                                                          S(X)&:= X B + \sum_{i, j=1}^q N_i^\top X M_j k_{ij},\\                                                                                                                                                                                                                                                                U(X)&:= \gamma I - \sum_{i, j=1}^q M_i^\top X M_j k_{ij}, \quad \gamma>0,                                                                                                                                                                                                                                                                \end{align*}
for some generic $X\in \mathbb R^{n\times n}$. Although $U$ depends on the parameter $\gamma$, this is omitted in the rest of the paper. Below, we write $M_1\geq M_2$ ($M_1>M_2$) for two matrices $M_1, M_2$ if $M_1-M_2$ is symmetric positive semidefinite (definite). We are now able to state the following definition.
\begin{defn}\label{def_gramP}
Given that $x_0=0$, a reachability Gramian for \eqref{original_system} is a symmetric matrix $P>0$ satisfying $U(P^{-1})>0$ and \begin{align}\label{gramP}
L(P^{-1})+S(P^{-1}) U(P^{-1})^{-1} S(P^{-1})^\top \leq 0.                                                                                                                                                        \end{align}
\end{defn}
Referring to \cite{bennerdammcruz}, $P$ can be viewed as a generalization of a so-called type II Gramian studied in that paper. Let us first ensure that a Gramian $P$ according to Definition \ref{def_gramP} exists.
\begin{prop}
Let the solution of the uncontrolled equation \eqref{stochstatenew} be mean square asymptotically stable. Then, \eqref{gramP} has a solution $P>0$ with $U(P^{-1})>0$.
\end{prop}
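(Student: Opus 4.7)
The plan is to reduce the claim to the classical stochastic Lyapunov characterization of mean square asymptotic stability and then find $P$ by a small scaling of the resulting Lyapunov matrix.

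First I would invoke the well-known fact (see e.g.\ \cite{staboriginal, mao}, which is cited in the paper for mean square asymptotic stability) that the uncontrolled system $dx(t) = Ax(t)\,dt + \sum_i N_i x(t)\,dw_i(t)$ is mean square asymptotically stable if and only if there exists a symmetric $Q > 0$ such that
\begin{equation*}
L(Q) = A^\top Q + Q A + \sum_{i,j=1}^q N_i^\top Q N_j k_{ij} < 0.
\end{equation*}
This is the stochastic analogue of the Lyapunov equation, obtained by applying Itô's formula to the quadratic Lyapunov function $V(x) = x^\top Q x$ and reading off the generator.

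Next, I would set $P^{-1} := \varepsilon Q$ for a parameter $\varepsilon > 0$ to be chosen small. Since $L$, $S$ and the quadratic form in $U$ are all linear in $X$, we get the scalings
\begin{align*}
L(P^{-1}) &= \varepsilon L(Q), \\
S(P^{-1}) &= \varepsilon\Big(QB + \sum_{i,j=1}^q N_i^\top Q M_j k_{ij}\Big) =: \varepsilon S_0, \\
U(P^{-1}) &= \gamma I - \varepsilon \sum_{i,j=1}^q M_i^\top Q M_j k_{ij}.
\end{align*}
In particular, $U(P^{-1}) \geq \tfrac{\gamma}{2} I$ for all sufficiently small $\varepsilon > 0$, so the positivity condition $U(P^{-1}) > 0$ holds and the Schur complement in \eqref{gramP} is well defined.

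It then remains to verify the matrix inequality. Let $c > 0$ be such that $L(Q) \leq -c I$, which exists by the previous step. Then for small $\varepsilon$,
\begin{equation*}
L(P^{-1}) + S(P^{-1}) U(P^{-1})^{-1} S(P^{-1})^\top
\;\leq\; -\varepsilon c I + \varepsilon^2 \,\tfrac{2}{\gamma}\, S_0 S_0^\top,
\end{equation*}
and the linear-in-$\varepsilon$ negative term dominates the quadratic-in-$\varepsilon$ positive perturbation as soon as $\varepsilon \leq \tfrac{\gamma c}{2 \|S_0 S_0^\top\|}$. Choosing $\varepsilon$ this small gives $L(P^{-1}) + S(P^{-1}) U(P^{-1})^{-1} S(P^{-1})^\top \leq 0$, so $P := (\varepsilon Q)^{-1} > 0$ is a Gramian in the sense of Definition \ref{def_gramP}.

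The only real obstacle is citing the Lyapunov characterization in the exact form needed (with the covariance-weighted sum $\sum_{i,j} N_i^\top Q N_j k_{ij}$), but this is standard for Itô SDEs with correlated Wiener noise and is already implicit in the references the paper uses for mean square stability. The remainder is a routine small-parameter continuity argument.
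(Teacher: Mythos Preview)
Your proposal is correct and follows essentially the same route as the paper: invoke the Lyapunov characterization of mean square asymptotic stability to get $X>0$ with $L(X)<0$, set $P^{-1}=\varepsilon X$, and choose $\varepsilon$ small so that the $O(\varepsilon)$ negative drift term dominates the $O(\varepsilon^2)$ Schur-complement correction while $U(\varepsilon X)>0$. The only cosmetic difference is that the paper bounds $U(\varepsilon X)^{-1}$ via the largest eigenvalue $\lambda_{\max}$ of $\sum_{i,j} M_i^\top X M_j k_{ij}$ rather than your cruder $\tfrac{2}{\gamma}I$ bound, and it cites \cite{damm, redmannspa2} (rather than \cite{staboriginal, mao}) for the Lyapunov equivalence.
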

\begin{proof}
 The underlying mean square asymptotic stability for system \eqref{original_system} is equivalent to the existence of a matrix $X>0$, so that $L(X)=-Y<0$, see \cite{damm, redmannspa2}. Given an $\epsilon>0$, we conclude that $L(\epsilon X)=-\epsilon Y$. We immediately see that $U(\epsilon X)= \gamma I - \epsilon\sum_{i, j=1}^q M_i^\top X M_j k_{ij}>0$ for a sufficiently small $\epsilon$ exploiting that $\sum_{i, j=1}^q M_i^\top X M_j k_{ij}\geq 0$. Let $\lambda\geq 0$ denote an arbitrary eigenvalue of $\sum_{i, j=1}^q M_i^\top X M_j k_{ij}$. Then, $\frac{1}{\gamma-\epsilon \lambda}$ is an eigenvalue of $U(\epsilon X)^{-1}$. If $\lambda_{\max}$ is the largest out of all these eigenvalues, we obtain\begin{align*}
 \epsilon Y -S(\epsilon X) U(\epsilon X)^{-1} S(\epsilon X)^\top &\geq \epsilon Y -\epsilon ^2 S(X)S(X)^\top   \frac{1}{\gamma-\epsilon \lambda_{\max}}  \\
 &\geq I (\epsilon \lambda^Y_{\min} -\epsilon ^2 \lambda_{\max}^{SS^\top}   \frac{1}{\gamma-\epsilon \lambda_{\max}}) >0                                                                                                                                                                                                                                                                                                                                                                                                                                                                                                                                                                                                                                                                                                                                                                                                                                                                                           \end{align*}
for $\epsilon >0$ small enough, where $\lambda^Y_{\min}>0$ is the smallest and $\lambda_{\max}^{SS^\top}\geq 0$ the largest eigenvalue of $Y$ and $S(X)S(X)^\top$, respectively. Using this estimate yields $L(\epsilon X)< -S(\epsilon X) U(\epsilon X)^{-1} S(\epsilon X)^\top$ for a sufficiently small $\epsilon$. For that reason, $P=(\epsilon X)^{-1}$ is a Gramian according to Definition \ref{def_gramP}.
\end{proof}

 Since $P>0$ is symmetric, we can find a basis of eigenvectors $(p_k)$ providing a representation for the state variable in \eqref{stochstatenew}:
\begin{align}\label{eigen_rep}
x(t)= \sum_{k=1}^n \langle x(t), p_{k}  
\rangle_2 \,p_k.                                                                                                                                                                                                                                                                                                                                                                                                                                                                                                                                                                                                                                                                                                                                                                                                                                                                                                                                                                                                                                                        \end{align}
The next theorem answers the question which directions $p_k$ can be neglected in \eqref{stochstatenew}.
\begin{thm}\label{energy_est}
Let $P$ be as in Definition \ref{def_gramP} and $x$ be the solution of \eqref{stochstatenew}. Moreover, let $(p_k, \lambda_{k})$ be an associated basis of eigenvectors giving us \eqref{eigen_rep} together with the eigenvalues $\lambda_{k}>0$. With $x_0=0$, we have
\begin{align*}
\sup_{t\in[0, T]}\mathbb E \langle x(t), p_{k}  \rangle_2^2 &\leq \lambda_{k} \gamma\left\|u\right\|_{L^2_T}^2,
\end{align*}
where $\gamma>0$ enters the definition of the operator $U$.
\end{thm}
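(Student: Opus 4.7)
The plan is to reduce the claim to an energy estimate for the Lyapunov-type quadratic form $V(x) := x^\top P^{-1} x$ and then invoke It\^o's formula. Choosing the eigenvectors $(p_k)$ orthonormal, a direct spectral computation gives
\begin{align*}
\lambda_k P^{-1} - p_k p_k^\top = \sum_{j\neq k} \frac{\lambda_k}{\lambda_j}\, p_j p_j^\top \;\geq\; 0,
\end{align*}
so that $\langle x(t), p_k\rangle_2^2 = x(t)^\top p_k p_k^\top x(t) \leq \lambda_k\, x(t)^\top P^{-1} x(t)$ pathwise. Therefore it suffices to prove $\mathbb{E}[x(t)^\top P^{-1} x(t)] \leq \gamma \|u\|_{L^2_T}^2$ for every $t \in [0,T]$.

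To establish that bound, I would apply It\^o's formula to $V(x(t))$ along \eqref{stochstatenew} with $x(0)=0$ and take expectations, killing the martingale part. Using the definitions of $L$, $S$, $U$ and the identity $\sum_{i,j} M_i^\top P^{-1} M_j k_{ij} = \gamma I - U(P^{-1})$, a straightforward expansion gives
\begin{align*}
\frac{d}{dt}\, \mathbb{E}[x^\top P^{-1} x] \;=\; \mathbb{E}\bigl[\, x^\top L(P^{-1}) x + 2 x^\top S(P^{-1}) u + u^\top (\gamma I - U(P^{-1})) u \,\bigr].
\end{align*}
Writing $S := S(P^{-1})$ and $U := U(P^{-1})$ for brevity and invoking the matrix inequality $L(P^{-1}) \leq -S U^{-1} S^\top$ from Definition \ref{def_gramP}, the integrand is bounded by
\begin{align*}
-(S^\top x - U u)^\top U^{-1} (S^\top x - U u) + \gamma \|u\|_2^2 \;\leq\; \gamma \|u\|_2^2,
\end{align*}
where completing the square is legitimate because $U > 0$. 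Integrating from $0$ to $t$ and using $x(0)=0$ yields $\mathbb{E}[x(t)^\top P^{-1} x(t)] \leq \gamma \int_0^t \mathbb{E}\|u(s)\|_2^2\, ds \leq \gamma\|u\|_{L^2_T}^2$, after which the spectral estimate finishes the proof uniformly in $t$.

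The only obstacle, technical rather than conceptual, is the bookkeeping of the It\^o correction $\sum_{i,j}(N_i x + M_i u)^\top P^{-1}(N_j x + M_j u) k_{ij}$ and the clean separation of the $\gamma I$ piece from $U(P^{-1})$ so that the completion of squares leaves precisely $\gamma \|u\|_2^2$. Exchanging differentiation, expectation, and time integration is routine under the $L^2_T$ assumption on $u$ and the mean-square asymptotic stability of the unforced system.
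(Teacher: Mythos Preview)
Your proposal is correct and follows essentially the same route as the paper: apply It\^o's formula to $x(t)^\top P^{-1}x(t)$, reorganize the drift into $x^\top L(P^{-1})x + 2x^\top S(P^{-1})u + \gamma\|u\|_2^2 - u^\top U(P^{-1})u$, use the Gramian inequality \eqref{gramP} to complete the square, and combine with the spectral bound $\langle x,p_k\rangle_2^2 \le \lambda_k\, x^\top P^{-1}x$. The only cosmetic differences are that the paper performs the spectral estimate at the end rather than the beginning and writes the completed square as $\|U(P^{-1})^{-1/2}S(P^{-1})^\top x - U(P^{-1})^{1/2}u\|_2^2$, which is algebraically identical to your $(S^\top x - Uu)^\top U^{-1}(S^\top x - Uu)$.
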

\begin{proof}
We apply Ito's product rule and subsequently take the expectation, see Lemma \ref{lemstochdiff} for details. Hence,  \begin{align*}
 \frac{d}{dt}\mathbb E\left[x(t)^\top P^{-1} x(t)\right]= & 2 \mathbb E\left[x(t)^\top P^{-1} [Ax(t)+Bu(t)]\right]  \\
 &+ \sum_{i, j=1}^q \mathbb E\left[\big(N_i x(t) +M_i u(t)\big)^\top P^{-1} \big(N_j x(t) +M_j u(t)\big)\right]k_{ij}.
 \end{align*}
We integrate this equation over $[0, t]$ with $t\leq T$ yielding
\begin{align*}
 \mathbb E\left[x(t)^\top P^{-1} x(t)\right]&=  \int_0^t \mathbb E\left[x(s)^\top L(P^{-1}) x(s)\right]+2  \mathbb E\left[x(s)^\top P^{-1}B u(s) \right]\\
 &\quad \quad+ \sum_{i, j=1}^q \mathbb E\left[2 x(s)^\top N_i^\top P^{-1} M_j u(s)+u(s)^\top M_i^\top P^{-1} M_j u(s)\right]k_{ij}ds\\
&=  \int_0^t \mathbb E\left[x(s)^\top L(P^{-1}) x(s)\right]+2 \mathbb E\left[x(s)^\top S(P^{-1}) u(s) \right]\\
 &\quad \quad + \gamma \mathbb E\left[u(s)^\top u(s)\right]-\mathbb E\left[u(s)^\top U(P^{-1}) u(s)\right]ds.
 \end{align*}
 We insert \eqref{gramP} into the above equation and obtain \begin{align*}
 \mathbb E\left[x(t)^\top P^{-1} x(t)\right]&\leq \int_0^t -\mathbb E\left[x(s)^\top S(P^{-1}) U(P^{-1})^{-1} S(P^{-1})^\top  x(s)\right]\\
 &\quad \quad +2 \mathbb E\left[x(s)^\top S(P^{-1}) u(s) \right] + \gamma \mathbb E\|u(s)\|_2^2-\mathbb E\left[u(s)^\top U(P^{-1}) u(s)\right]ds \\
 &= \mathbb E \int_0^t -\|U(P^{-1})^{-\frac{1}{2}} S(P^{-1})^\top  x(s)\|_2^2-\|U(P^{-1})^\frac{1}{2} u(s)\|_2^2 ds\\
 &\quad \quad +\mathbb E \int_0^t 2 \langle S(P^{-1})^\top x(s), u(s) \rangle_2 ds + \gamma \|u\|_{L^2_t}^2\\
 &=\gamma \|u\|_{L^2_t}^2- \mathbb E \int_0^t \|U(P^{-1})^{-\frac{1}{2}} S(P^{-1})^\top  x(s)-U(P^{-1})^\frac{1}{2} u(s)\|_2^2 ds\\
 &\leq \gamma \|u\|_{L^2_T}^2
 \end{align*}
exploiting that $U(P^{-1})$ is positive definite. The statement follows by the observation that
 \begin{align*}
 \langle x(t), p_{k}  \rangle_2^2 &\leq \lambda_{k}\; \sum_{i=1}^n \lambda_{i}^{-1} \langle x(t), p_{i}  
\rangle_2^2 =\lambda_{k} \Big\|\sum_{i=1}^n \lambda_{i}^{-\frac{1}{2}} \langle x(t), p_{i}  \rangle_2 \;p_{i}\Big\|_{2}^2
 =\lambda_{k} \Big\|P^{-\frac{1}{2}} x(t)\Big\|_{2}^2\\
 &= \lambda_{k} \; x(t)^\top P^{-1} x(t).
\end{align*}
This concludes the proof.
\end{proof}
 Theorem \ref{energy_est} tells us that all eigenspaces corresponding to small eigenvalues of $P$ can be removed only causing a small error. This will be exploited in the dimension reduction procedure later. However, we need to be able to solve for $P$. Therefore, let us briefly comment on the computation of this Gramian below.
\begin{remark}
 Solving \eqref{gramP} seems to be very tricky and is a bottleneck of our approach as long as no efficient solver is available for such matrix inequalities. In fact, a straight forward approach is to solve for $P^{-1}$ instead of computing $P$ directly. This can be done by
reformulating \eqref{gramP} as a linear matrix inequality based on Schur complement conditions for the definiteness of a matrix. This yields
 \begin{align}\label{LMI_remark}
 \begin{bmatrix}
 {L(P^{-1})} & {S(P^{-1})} \\
 {S(P^{-1})^\top}& {-U(P^{-1})}
 \end{bmatrix}\leq 0, \quad P^{-1}, U(P^{-1})>0,
\end{align}
maximizing, e.g., the trace of $P^{-1}$. Here, the hope is to find a $P^{-1}$, such that $P$ has a large number of small eigenvalues. This is beneficial within the dimension reduction approach due to Theorem \ref{energy_est}. Now, a linear matrix inequality solver (e.g. YALMIP and MOSEK \cite{yalmip, mosek}) can find a solution to the maximization of $\trace(P)$ subject to \eqref{LMI_remark}. 
\end{remark}
Below, we aim to investigate which directions in the state variable $x$ (with zero initial condition) are of low relevance for the quantity of interest $y$ in \eqref{output_eq}. In this context, let us introduce a second Gramian.
\begin{defn}\label{def_gramQ}
Given $x_0=0$, an observability Gramian for \eqref{original_system} is a symmetric matrix $Q\geq 0$ satisfying \begin{align}\label{gramQ}
L(Q)+C^\top C \leq 0.                                                                                                                                                        \end{align}
\end{defn}
Choosing an arbitrary $t_0\in [0, \infty)$, we denote the solution to \eqref{stochstatenew} by $x(t; t_0,  x_0, u)$ for the moment with $t_0$ representing the initial time and $t\geq t_0$. Further, if $t_0=0$, we set $x(t):=x(t; 0, x_0, u)$. Exploiting the linearity of the solution in $x_0$ and $u$, we obtain \begin{align}\label{decompxt}
x(t)= x(t; 0, x_0, u) = x(t; t_0, x(t_0), u)  = x(t; t_0, x(t_0), 0)             +x(t; t_0, 0, u).                                                                                                                                                                                                                                                                                                                                                                                                                                                                                                     \end{align}
Based on \eqref{decompxt}, we aim to answer which directions in $x(t_0)$ are less relevant for the output $y$. Exploiting that an observability Gramian according to Definition \ref{def_gramQ} is symmetric, we choose a basis of eigenvectors $(q_k)$ of $Q$ for $\mathbb R^n$ yielding 
\begin{align}\label{eigen_rep2}
x(t_0)= \sum_{k=1}^n \langle x(t_0), q_{k}  
\rangle_2 \,q_k.                                                                                                                                                                                                                                                                                                                                                                                                                                                                                                                                                                                                                                                                                                                                                                                                                                                                                                                                                                                                                                                        \end{align}
Inserting \eqref{eigen_rep2} into \eqref{decompxt} and using the linearity in the initial state once more, we have 
\begin{align}\label{unwichtigeqk}
 y(t)= \sum_{k=1}^n \langle x(t_0), q_{k}  
\rangle_2  Cx(t; t_0, q_k, 0)             +Cx(t; t_0, 0, u)                                                                                                                                                                                                                                                                                                                                                                                                                                                                                                     \end{align}
for $t\geq t_0$. In order to answer which directions $q_k$ can be neglected in $y$, we formulate the following.
\begin{prop}\label{prop_qk_unimportatn}
Let $(q_k)$ be eigenvectors of $Q$ that represent an orthonormal basis of $\mathbb R^n$ with associated eigenvalues $(\mu_k)$. Let $x(t; t_0, q_k, 0)$ be the solution of \eqref{stochstatenew} with $x_0=q_k$, $u\equiv 0$ and $t_0$ being the initial time. Then, we have \begin{align*}
 \mathbb E\int_{t_0}^\infty  \Big\|\langle x(t_0), q_{k}  
\rangle_2  Cx(t; t_0, q_k, 0) \Big\|_{2}^2  dt \leq \mu_k  \mathbb E \langle x(t_0), q_{k}  
\rangle_2^2.                                                                                                                                                                                                                                                                                                                                                                                                                                                                                                                                    \end{align*}
\end{prop}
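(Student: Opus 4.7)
The plan is to apply the observability-Gramian inequality \eqref{gramQ} via an Ito-type energy argument to the uncontrolled trajectory $t\mapsto x(t;t_0,q_k,0)$, and then combine with a conditioning/independence argument to pull the $\mathcal F_{t_0}$-measurable coefficient $\langle x(t_0),q_k\rangle_2$ out of the integral.

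First, I would fix $k$ and consider the process $\xi(t):=x(t;t_0,q_k,0)$, $t\ge t_0$, which solves the uncontrolled SDE \eqref{stochstatenew} (with $u\equiv 0$) started at the deterministic vector $q_k$ at time $t_0$. Applying Ito's product rule to $\xi(t)^\top Q\xi(t)$ and taking expectations (analogously to the derivation in the proof of Theorem \ref{energy_est}, but with $u\equiv 0$), one obtains
\begin{align*}
\frac{d}{dt}\mathbb E\bigl[\xi(t)^\top Q\xi(t)\bigr] = \mathbb E\bigl[\xi(t)^\top L(Q)\xi(t)\bigr].
\end{align*}
Using $L(Q)+C^\top C\le 0$ from Definition \ref{def_gramQ} this yields the differential inequality
\begin{align*}
\frac{d}{dt}\mathbb E\bigl[\xi(t)^\top Q\xi(t)\bigr] \le -\mathbb E\bigl[\|C\xi(t)\|_2^2\bigr].
\end{align*}

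Next, I would integrate over $[t_0,T']$ and let $T'\to\infty$. By the standing assumption of mean-square asymptotic stability of the uncontrolled equation, $\mathbb E\|\xi(T')\|_2^2\to 0$ as $T'\to\infty$, hence $\mathbb E[\xi(T')^\top Q\xi(T')]\to 0$. Since $\xi(t_0)=q_k$ is deterministic, this gives
\begin{align*}
\int_{t_0}^{\infty}\mathbb E\bigl[\|C\xi(t)\|_2^2\bigr]\,dt \;\le\; q_k^\top Q q_k \;=\; \mu_k,
\end{align*}
where the last equality uses that $q_k$ is a unit eigenvector of $Q$ with eigenvalue $\mu_k$.

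Finally, I would argue that $\langle x(t_0),q_k\rangle_2$ is $\mathcal F_{t_0}$-measurable, while $\xi(t)$ for $t\ge t_0$ is driven only by the Wiener increments $(w(s)-w(t_0))_{s\ge t_0}$ (and the deterministic initial value $q_k$), so it is independent of $\mathcal F_{t_0}$. Consequently the expectation factorizes, and by Fubini
\begin{align*}
\mathbb E\int_{t_0}^{\infty}\!\bigl\|\langle x(t_0),q_k\rangle_2 C\xi(t)\bigr\|_2^2 dt \;=\; \mathbb E\bigl[\langle x(t_0),q_k\rangle_2^{\,2}\bigr]\int_{t_0}^{\infty}\!\mathbb E\bigl[\|C\xi(t)\|_2^2\bigr]dt \;\le\; \mu_k\,\mathbb E\langle x(t_0),q_k\rangle_2^{\,2},
\end{align*}
which is the claim. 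The only delicate point I expect is the justification that the boundary term at infinity vanishes: one needs mean-square asymptotic stability of the uncontrolled system to ensure $\mathbb E\|\xi(t)\|_2^2\to 0$, and this is exactly the standing hypothesis on \eqref{stochstatenew}. The independence/measurability step is standard once the two pieces of the decomposition \eqref{decompxt} are recognised to live on disjoint time-pieces of the driving Brownian motion.
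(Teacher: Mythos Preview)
Your proof is correct and follows essentially the same route as the paper: an Ito energy argument for $\xi(t)^\top Q\xi(t)$ using \eqref{gramQ}, combined with the $\mathcal F_{t_0}$-measurability/independence factorization. The only cosmetic differences are that the paper performs the factorization step first and, rather than invoking mean-square stability to kill the boundary term, simply drops the nonnegative left-hand side $\mathbb E[\xi(t)^\top Q\xi(t)]\ge 0$ to obtain the bound uniformly in $t$ before letting $t\to\infty$.
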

\begin{proof}
 We obtain that \begin{align*}
&\mathbb E \Big\|\langle x(t_0), q_{k}  
\rangle_2  Cx(t; t_0, q_k, 0) \Big\|_{2}^2 = \mathbb E \Big[\mathbb E\Big\{\langle x(t_0), q_{k}  
\rangle_2^2\Big\| Cx(t; t_0, q_k, 0) \Big\|_{2}^2 \Big\vert \mathcal F_{t_0}\Big\}\Big]\\
&= \mathbb E \Big[\langle x(t_0), q_{k}  
\rangle_2^2\mathbb E\Big\{\Big\| Cx(t; t_0, q_k, 0) \Big\|_{2}^2 \Big\vert \mathcal F_{t_0}\Big\}\Big]= \mathbb E \Big[\langle x(t_0), q_{k}  
\rangle_2^2\Big]\mathbb E\Big\| Cx(t; t_0, q_k, 0) \Big\|_{2}^2
                \end{align*}
exploiting that $x(t_0)$ is $\mathcal F_{t_0}$-measurable and $x(t; t_0, q_k, 0)$ is independent of $\mathcal F_{t_0}$. Using the arguments in the proof of Theorem \ref{energy_est} based on Ito's product rule (see also Lemma \ref{lemstochdiff}), we obtain
\begin{align*}
 \frac{d}{dt}\mathbb E\left[x(t; t_0, q_k, 0)^\top Q x(t; t_0, q_k, 0)\right]= & 2 \mathbb E\left[x(t; t_0, q_k, 0)^\top Q Ax(t; t_0, q_k, 0)\right]  \\
 &+ \sum_{i, j=1}^q \mathbb E\left[\big(N_i x(t; t_0, q_k, 0) \big)^\top Q N_j x(t; t_0, q_k, 0) \right]k_{ij}.
 \end{align*}
 We integrate this equation over $[t_0, t]$ giving us
\begin{align*}
 \mathbb E\left[x(t; t_0, q_k, 0)^\top Q x(t; t_0, q_k, 0)\right]= q_k^\top Q q_k+  \int_{t_0}^t \mathbb E\left[x(s; t_0, q_k, 0)^\top L(Q) x(s; t_0, q_k, 0)\right]ds.
\end{align*}
We insert \eqref{gramQ} into this equation resulting in \begin{align*}
 \int_{t_0}^t \mathbb E\Big\| Cx(s; t_0, q_k, 0) \Big\|_{2}^2 ds \leq \mu_k                                                                                              \end{align*}
and the result follows by taking $t\rightarrow\infty$.
 \end{proof}
 Proposition \ref{prop_qk_unimportatn} together with \eqref{unwichtigeqk} tell us that directions $q_k$ of $x(t_0)$ in \eqref{eigen_rep2} contribute very little to $y(t)$, $t\geq t_0$ for each $t_0\in[0, \infty)$ in case $\mu_k$ is small. Such directions will be neglected in the reduced dynamics.

 \subsection{Balancing transformation and reduced order model}\label{sec_BT}
Below, we further suppose that $x_0=0$.   
Using a regular matrix
$T\in\mathbb{R}^{n\times n}$, we introduce a balanced state variable by $x_b=Tx$. This transformation is chosen in a particular way in order to simultaneously diagonalize $P$ and $Q$ introduced in Definitions \ref{def_gramP} and \ref{def_gramQ}. Such a $T$ always exists under the condition that $Q>0$. As a consequence of the diagonalization, the associated bases of eigenvectors $(p_k)$ and $(q_k)$  will be the canonical basis of $\mathbb R^n$. Referring to Theorem \ref{energy_est} and Proposition \ref{prop_qk_unimportatn} this means that  components in the transformed state variable $x_b$ turn out to be of low relevance if the associated  diagonal entries of the diagonalized Gramians are small.  We now insert $x=T^{-1}x_b$ into \eqref{original_system}  leading to a stochastic
system with coefficients
\begin{align*}
  (A_b, B_b, M_{i, b}, N_{i, b}, C_b):=(TAT^{-1},TB, TM_i, T N_i T^{-1},CT^{-1})
\end{align*}
instead of the original ones $(A, B, M_i, N_i,C)$, i.e.,  \begin{subequations}\label{original_system_trans}
\begin{align}\label{stochstate_trans}
             dx_b(t)&=[A_b x_b(t)+B_b u(t)]dt+\sum_{i=1}^q [N_{i, b} x_b(t)+M_{i, b} u(t)]dw_i(t),\\ \label{output_eq_trans}
            y(t) &= C_bx_b(t)+D u(t),\quad x_b(0)=0, \quad t\in [0, T].
\end{align}
\end{subequations}
Systems \eqref{original_system} and \eqref{original_system_trans} have the same input-output behaviour and are therefore considered to be equivalent. They share properties such as asymptotic stability. However, the Gramians are different. The particular choice of $T$ as well as the form of the Gramians of \eqref{original_system_trans} are stated in the following proposition.
\begin{prop}\label{prop_bal}
 Suppose that $T$ is an invertible matrix. Given the Gramians $P$ and $Q$ of the original system \eqref{original_system} with $x_0=0$ according to Definitions \ref{def_gramP} and \ref{def_gramQ}. Then, the Gramians of \eqref{original_system_trans} are $P_b= TPT^\top$ and $Q_b=T^{-\top}QT^{-1}$. Suppose that $Q>0$, we can find a simultaneous diagonalization, i.e., $P_b=Q_b = \Sigma= \diag(\sigma_1,\ldots,\sigma_n) $ using the balancing transformation \begin{align}\label{bal_transform}
       T=\Sigma^{\frac{1}{2}} U^\top L_P^{-1},                                                                                                                                                                                                                                                                                                                                                                                                                                                                                                                                                                                                                                                                                                                                                                                          \end{align} 
where $P=L_PL_P^\top$ and $L_P^\top QL_P=U\Sigma^2 U^\top$ is a spectral factorization  with an orthogonal $U$.
\end{prop}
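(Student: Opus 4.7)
The proof splits into two independent claims: (i) the conjugated matrices $P_b=TPT^\top$ and $Q_b=T^{-\top}QT^{-1}$ satisfy Definitions \ref{def_gramP} and \ref{def_gramQ} with respect to the transformed coefficients $(A_b,B_b,M_{i,b},N_{i,b},C_b)$, and (ii) the specific balancing transformation $T=\Sigma^{1/2}U^\top L_P^{-1}$ simultaneously diagonalises these conjugated Gramians to $\Sigma$. I would handle the two parts in this order, since the first is purely structural while the second is a direct computation from factorisations.

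For (i), the plan is to show that the operators $L$, $S$, $U$ from the definition of $P$ transform \emph{covariantly} under $(A,B,M_i,N_i)\mapsto (A_b,B_b,M_{i,b},N_{i,b})=(TAT^{-1},TB,TM_i,TN_iT^{-1})$, when the argument is correspondingly replaced by $P_b^{-1}=T^{-\top}P^{-1}T^{-1}$. A short calculation of the three pieces should yield
\begin{align*}
L_b(P_b^{-1}) &= T^{-\top}L(P^{-1})T^{-1},\\
S_b(P_b^{-1}) &= T^{-\top}S(P^{-1}),\\
U_b(P_b^{-1}) &= U(P^{-1}),
\end{align*}
the last one being invariant because the factors $T^\top$ and $T^{-\top}$ cancel out in $M_{i,b}^\top P_b^{-1}M_{j,b}$. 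Plugging these three identities into inequality \eqref{gramP} for $P$ and sandwiching by $T^{-\top}(\,\cdot\,)T^{-1}$ gives the corresponding inequality for $P_b$, and the definiteness conditions $P_b^{-1}>0$ and $U_b(P_b^{-1})>0$ follow from invertibility of $T$ together with the analogous conditions on $P$. The same congruence argument gives $L_b(Q_b)+C_b^\top C_b = T^{-\top}\bigl(L(Q)+C^\top C\bigr)T^{-1}\le 0$, verifying \eqref{gramQ} for $Q_b$.

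For (ii), I would just substitute $T=\Sigma^{1/2}U^\top L_P^{-1}$ and use the two factorisations $P=L_PL_P^\top$ and $L_P^\top QL_P=U\Sigma^2U^\top$ together with $U^\top U=I$. The computation $TPT^\top=\Sigma^{1/2}U^\top L_P^{-1}(L_PL_P^\top)L_P^{-\top}U\Sigma^{1/2}=\Sigma^{1/2}U^\top U\Sigma^{1/2}=\Sigma$ is immediate, and likewise $T^{-\top}QT^{-1}=\Sigma^{-1/2}U^\top(L_P^\top QL_P)U\Sigma^{-1/2}=\Sigma^{-1/2}U^\top U\Sigma^2U^\top U\Sigma^{-1/2}=\Sigma$. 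Here $P>0$ from Definition \ref{def_gramP} legitimises the Cholesky-type factorisation of $P$ and the invertibility of $L_P$, while the assumption $Q>0$ guarantees $\Sigma>0$, so that $\Sigma^{1/2}$ (and $T$) are well-defined and non-singular.

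The only genuinely delicate point is the bookkeeping in (i), where one must be careful that all $T$-factors combine correctly across the quadratic expressions inside $L$, $S$, $U$; in particular the cancellation in $U_b$ relies on $M_{i,b}=TM_i$ \emph{not} having a trailing $T^{-1}$, which is the reason for the asymmetric transformation rule for the input matrices. Apart from this, the statement is essentially an exercise in congruence transformations and simultaneous diagonalisation of two symmetric positive definite matrices, so no deeper obstacle is expected.
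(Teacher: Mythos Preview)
Your proposal is correct and follows essentially the same approach as the paper: congruence of the Gramian inequalities under $T^{-\top}(\cdot)T^{-1}$ via the transformation identities $L_b(P_b^{-1})=T^{-\top}L(P^{-1})T^{-1}$, $S_b(P_b^{-1})=T^{-\top}S(P^{-1})$, $U_b(P_b^{-1})=U(P^{-1})$, followed by a direct substitution of $T=\Sigma^{1/2}U^\top L_P^{-1}$ to verify $P_b=Q_b=\Sigma$. The paper carries out exactly these computations, merely writing the three operator identities out term by term rather than stating them in the compact form you give.
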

\begin{proof}
We multiply \eqref{gramP} with $T^{-\top}$ from the left and with $T^{-1}$ from the right. This results in \begin{align*}
T^{-\top} L(P^{-1})T^{-1}+T^{-\top} S(P^{-1}) U(P^{-1})^{-1} S(P^{-1})^\top T^{-1} \leq 0.                                                                                                                                                        \end{align*}
We look at each term individually and observe  \begin{align*}
T^{-\top} L(P^{-1})T^{-1}& =T^{-\top}  A^\top T^\top T^{-\top} P^{-1} T^{-1}+ T^{-\top} P^{-1} T^{-1}T A T^{-1}\\
&\quad+\sum_{i, j=1}^q T^{-\top}  N_i^\top T^\top T^{-\top} P^{-1} T^{-1} T N_j T^{-1} k_{ij}\\
&=A_b^\top P_b^{-1} + P_b^{-1} A_b+\sum_{i, j=1}^q N_{i, b}^\top  P_b^{-1} N_{j, b}  k_{ij}
\end{align*}
as well as \begin{align*}
T^{-\top} S(P^{-1}) & = T^{-\top}  P^{-1} T^{-1} T B+ \sum_{i, j=1}^q T^{-\top}  N_i^\top T^\top T^{-\top} P^{-1} T^{-1} T M_j  k_{ij}  \\ & = P_b^{-1} B_b + \sum_{i, j=1}^q N_{i, b}^\top  P_b^{-1} M_{j, b}  k_{ij}
           \end{align*}
and $U(P^{-1}) = \gamma I - \sum_{i, j=1}^q M_i^\top T^\top T^{-\top} P^{-1} T^{-1} T M_j  k_{ij}= \gamma I - \sum_{i, j=1}^q M_{i, b}^\top P_b^{-1} M_{j, b}  k_{ij}$. This shows that $P_b$ is the reachability Gramian of \eqref{original_system_trans}. Let us now multiply \eqref{gramQ} with $T^{-\top}$ from the left and with $T^{-1}$ from the right. Then, \begin{align*}
 T^{-\top}L(Q)T^{-1}+C_b^\top C_b \leq 0                                                                                                                                                                                                                                                                                                                                                                                                                                                                                                                                                                                                                                                                                                                                           \end{align*}
 shows that $Q_b$ is the observability Gramian of \eqref{original_system_trans} taking into account that $T^{-\top}L(Q)T^{-1}=A_b^\top Q_b + Q_b A_b+\sum_{i, j=1}^q N_{i, b}^\top  Q_b N_{j, b}  k_{ij}$. Since $P$ and $Q$ are invertible, the same is true for $L_P$ and $\Sigma$. Therefore, $T$ and its inverse $T^{-1}= L_P U \Sigma^{-\frac{1}{2}}$ are well-defined. Inserting this into the definitions of $P_b$ and $Q_b$ provides that both Gramians are equal to $\Sigma$. This completes the proof.
\end{proof}
Since $L_P^\top QL_P$ has the same eigenvalues as $PQ$ it becomes clear that the Hankel singular values (HSVs) $\sigma_i$ are given by $\sigma_i= \sqrt{\lambda_i(PQ)}$, where $\lambda_i(\cdot)$ denotes the $i$th eigenvalue of a matrix with real spectrum. We partition the balanced state   $x_b=\begin{bmatrix}
x_{1}\\ x_{2}                                         
\end{bmatrix}$, $x_1\in \mathbb R^r$, and the diagonal matrix of HSVs
$\Sigma=\diag(\Sigma_1,\Sigma_{2})$, where
  $\Sigma_1= \diag(\sigma_1,\ldots,\sigma_r)$ contains the large and $\Sigma_{2}=\diag(\sigma_{r+1},\ldots,\sigma_n)$ the small HSVs, $r<n$. Accordingly, we partition the balanced realization as follows:
  \begin{equation}\label{part_bal}
  \begin{aligned}
A_b= \begin{bmatrix}{A}_{11}& {A}_{12}\\ 
 {A}_{21}& {A}_{22}\end{bmatrix},\quad B_b &= \begin{bmatrix}{B}_1\\ B_2\end{bmatrix}, \quad M_{i, b} &= \begin{bmatrix}{M}_{i, 1}\\ M_{i, 2}\end{bmatrix} 
  \end{aligned},\quad N_{i, b}= \begin{bmatrix}{N}_{i, 11}&{N}_{i, 12} \\ 
{N}_{i, 21}& {N}_{i, 22}\end{bmatrix}, \quad C_b= \begin{bmatrix}{C}_1 &
C_2\end{bmatrix}.
  \end{equation}
 We set $x_2\equiv 0$ in the equations of $x_1$ and $y$, since $x_2$ is of low relevance. Truncating the equation for $x_2$ leads to the following reduced order model \begin{subequations}\label{rom_x00}
\begin{align}\label{red_stochstate_x00}
             dx_r(t)&=[A_{11} x_r(t)+B_1 u(t)]dt+\sum_{i=1}^q [N_{i, 11} x_r(t)+M_{i, 1} u(t)]dw_i(t),\\ \label{red_output_eq_x00}
            y_r(t) &= C_1x_r(t)+D u(t),\quad x_r(0)=0, \quad t\in [0, T].
\end{align}
\end{subequations} 
For simplicity of the notation, let us for the remainder of this subsection assume that the original system is already balanced, i.e., $x=x_b$ as well as $(A, B, M_i, N_i, C)=(A_b, B_b, M_{i, b}, N_{i, b}, C_b)$. Consequently, we deal with matrix inequalities of the following form \begin{align}\label{gramPQ_diag}
L(\Sigma^{-1})+S(\Sigma^{-1}) U(\Sigma^{-1})^{-1} S(\Sigma^{-1})^\top \leq 0, \quad \text{and}\quad
L(\Sigma)+C^\top C \leq 0.                                                                                                                                                     \end{align}
We now derive the same type of matrix inequalities for the reduced system \eqref{rom_x00}.
\begin{prop}\label{prop_red_bal}
 Given the balanced realization $(A, B, M_i, N_i, C)$ and introducing the reduced operators\begin{align*}                                                                                                                                                                                                                                                             L_{11}(X)&:= A_{11}^\top X + X A_{11} + \sum_{i, j=1}^q N_{i, 11}^\top X N_{j, 11} k_{ij},\\                                                                                                                                                                                                                                                          S_1(X)&:= X B_1 + \sum_{i, j=1}^q N_{i, 11}^\top X M_{j, 11} k_{ij},\\                                                                                                                                                                                                                                                                U_1(X)&:= \gamma I - \sum_{i, j=1}^q M_{i, 11}^\top X M_{j, 11} k_{ij}, \quad \gamma>0,                                                                                                                                                                                                                                                                \end{align*}
for a generic $X\in \mathbb R^{r\times r}$, it holds that $U_1(\Sigma_1^{-1})>0$ and \begin{align}\label{red_reach} 
L_{11}(\Sigma_1^{-1})+S_1(\Sigma_1^{-1}) U_1(\Sigma_1^{-1})^{-1} S_1(\Sigma_1^{-1})^\top \leq 0                                                                                                                                                      \end{align}
as well as \begin{align}\label{red_obs}
L_{11}(\Sigma_1)\leq C_1^\top C_1.
\end{align}
This means that $\Sigma_1$ is a Gramian of the balanced reduced system \eqref{rom_x00}.
\end{prop}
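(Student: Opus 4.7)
The plan is to reduce both claims to the block structure of the given matrix (in)equalities for the balanced system. The central idea is that passing from $(\Sigma,\Sigma^{-1})$ to $(\Sigma_1,\Sigma_1^{-1})$ amounts, up to explicit positive semidefinite correction terms coming from the $\Sigma_2$-block, to extracting principal submatrices. Since principal submatrices inherit semidefiniteness, this will deliver the required reduced inequalities.

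Concretely, for the observability part I would simply take the upper-left $r\times r$ block of $L(\Sigma)+C^\top C\leq 0$. The block-diagonal structure of $\Sigma$ gives $[L(\Sigma)]_{11}=L_{11}(\Sigma_1)+\sum_{i,j}N_{i,21}^\top\Sigma_2 N_{j,21}k_{ij}$, while $[C^\top C]_{11}=C_1^\top C_1$. The extra sum is positive semidefinite (write it as a Gram-type expression with $\Sigma_2\geq 0$ and $K=(k_{ij})\geq 0$), so dropping it preserves the inequality $L_{11}(\Sigma_1)+C_1^\top C_1\leq 0$, which is \eqref{red_obs}.

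For the reachability part I would work with the Schur-complement reformulation \eqref{LMI_remark} applied with $P^{-1}=\Sigma^{-1}$ (legitimate since $U(\Sigma^{-1})>0$). Partitioning everything according to the block $\mathbb R^r\oplus\mathbb R^{n-r}\oplus\mathbb R^m$, I obtain a $3\times 3$ block LMI whose principal submatrix formed by keeping only the first and third block rows/columns is itself negative semidefinite. Computing that submatrix using the block-diagonal form of $\Sigma^{-1}$, I would isolate the ``reduced part'' and the ``correction part'':
\begin{align*}
\begin{bmatrix} L_{11}(\Sigma_1^{-1}) & S_1(\Sigma_1^{-1})\\ S_1(\Sigma_1^{-1})^\top & -U_1(\Sigma_1^{-1})\end{bmatrix}
+\sum_{i,j}\begin{bmatrix}N_{i,21}^\top\\ M_{i,2}^\top\end{bmatrix}\Sigma_2^{-1}\begin{bmatrix}N_{j,21} & M_{j,2}\end{bmatrix}k_{ij}\leq 0.
\end{align*}
The second summand is positive semidefinite (same Gram-type argument using $K\geq 0$ and $\Sigma_2^{-1}>0$); in particular, reading off the lower-right block, $U_1(\Sigma_1^{-1})=U(\Sigma^{-1})+\sum_{i,j}M_{i,2}^\top\Sigma_2^{-1}M_{j,2}k_{ij}\geq U(\Sigma^{-1})>0$. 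Dropping the correction term and then undoing the Schur complement (now with respect to $U_1(\Sigma_1^{-1})>0$) yields \eqref{red_reach}.

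The only mildly delicate step is verifying that the correction matrices $\sum_{i,j}N_{i,21}^\top\Sigma_2 N_{j,21}k_{ij}$ and the block analogue appearing in the reachability argument are genuinely positive semidefinite, which I would handle in one shot by writing them as $(I\otimes L_{\Sigma_2})^\top(K\otimes I)(I\otimes L_{\Sigma_2})$ types of expressions with $\Sigma_2=L_{\Sigma_2}L_{\Sigma_2}^\top$ and exploiting $K\geq 0$. Apart from this, the argument is purely bookkeeping with block partitions and two applications of Schur complements. The main obstacle I anticipate is keeping the signs straight when going back and forth between the LMI form \eqref{LMI_remark} and the nonlinear form \eqref{gramP}, especially to ensure that $U_1(\Sigma_1^{-1})>0$ is verified before the final Schur step.
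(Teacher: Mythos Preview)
Your proposal is correct and follows essentially the same route as the paper. The paper converts the reachability LMI to the block form \eqref{schur_rep} and then tests it against vectors $\smat v_1\\0\\z\srix$, which is exactly the scalar version of your ``extract the $(1,3)$-principal submatrix of the $3\times3$ block LMI''; the correction term is identified there as $-\big\|(K\otimes\Sigma_2^{-1})^{1/2}(N_{21}v_1-M_2 z)\big\|_2^2$, i.e.\ precisely your $\sum_{i,j}\smat N_{i,21}^\top\\M_{i,2}^\top\srix\Sigma_2^{-1}\smat N_{j,21}&M_{j,2}\srix k_{ij}$ written via the Kronecker factorization $K\otimes\Sigma_2^{-1}\geq 0$. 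The observability argument is identical: the paper reads off the $(1,1)$-block of $L(\Sigma)+C^\top C\leq 0$ and drops $N_{21}^\top(K\otimes\Sigma_2)N_{21}\geq 0$, just as you describe.
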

\begin{proof}
 By the definiteness criterion of Schur, the first inequality in \eqref{gramPQ_diag} is equivalent to \begin{align}\label{schur_rep}
 \begin{bmatrix}-L(\Sigma^{-1})& S(\Sigma^{-1})\\ 
 S(\Sigma^{-1})^\top& U(\Sigma^{-1})\end{bmatrix}  \geq 0.                                                                                                                                                                                                    \end{align}
We multiply this inequality with $\begin{bmatrix} v\\ 
 z\end{bmatrix}^\top$ from the left and with $\begin{bmatrix} v\\ 
 z\end{bmatrix}$ from the right, where $v\in \mathbb R^n$ and $z\in \mathbb R^m$ yielding \begin{align*}
 -v^\top L(\Sigma^{-1}) v + 2v^\top S(\Sigma^{-1}) z+ z^\top U(\Sigma^{-1}) z  \geq 0.                                                                                                                                                                                                    \end{align*}
 We use the partition in \eqref{part_bal} and the definition of the reduced operators. Now, setting $v=\begin{bmatrix} v_1\\ 
 0\end{bmatrix}$, where $v_1\in \mathbb R^r$, we obtain \begin{align*}
 0\leq &-v_1^\top\Big( L_{11}(\Sigma_1^{-1}) + \sum_{i, j=1}^q N_{i, 21}^\top \Sigma_2^{-1} N_{j, 21} k_{ij} \Big)v_1\\
 &+ 2v_1^\top\Big( S_1(\Sigma_1^{-1})+  \sum_{i, j=1}^q N_{i, 21}^\top \Sigma_2^{-1} M_{j, 2} k_{ij} \Big)z
 + z^\top\Big(U_1(\Sigma_1^{-1})- \sum_{i, j=1}^q M_{i, 2}^\top \Sigma_2^{-1} M_{j, 2} k_{ij}\Big) z.                                                                                                                                                                                                    \end{align*}
 It remains to show that $-v_1^\top \sum_{i, j=1}^q N_{i, 21}^\top \Sigma_2^{-1} N_{j, 21} k_{ij} v_1+2v_1^\top \sum_{i, j=1}^q N_{i, 21}^\top \Sigma_2^{-1} M_{j, 2} k_{ij} z-z^\top \sum_{i, j=1}^q M_{i, 2}^\top \Sigma_2^{-1} M_{j, 2} k_{ij}z\leq 0$, since 
 \begin{align*}
-v_1^\top L_{11}(\Sigma_1^{-1}) v_1+2v_1^\top S_1(\Sigma_1^{-1})z +z^\top U_1(\Sigma_1^{-1})z \geq 0                                                                                                                                                                                                   \end{align*}
for all $v_1\in \mathbb R^r$ and $z\in\mathbb R^m$ is equivalent to \eqref{red_reach}. In order to show the missing step for \eqref{red_reach}, we observe that \begin{align*}
 \sum_{i, j=1}^q N_{i, 21}^\top \Sigma_2^{-1} N_{j, 21} k_{ij}&=N_{21}^\top \big(K\otimes \Sigma_2^{-1}\big) N_{21},\\
  \sum_{i, j=1}^q N_{i, 21}^\top \Sigma_2^{-1} M_{j, 2} k_{ij}&=N_{21}^\top \big(K\otimes \Sigma_2^{-1}\big) M_{2},\\
  \sum_{i, j=1}^q M_{i, 2}^\top \Sigma_2^{-1} M_{j, 2} k_{ij}&=M_{2}^\top \big(K\otimes \Sigma_2^{-1}\big) M_{2},                                                                                                                                      \end{align*}
where $N_{21}^\top:=\begin{bmatrix}
 N_{1, 21}^\top & \dots & N_{q, 21}^\top                     
                    \end{bmatrix}$ and $M_{2}^\top:=\begin{bmatrix}
 M_{1, 2}^\top & \dots & M_{q, 2}^\top                     
                    \end{bmatrix}$. Since $K=(k_{ij})$ and $\Sigma_2$ are 
positive (semi-)definite, we have that $K\otimes \Sigma_2^{-1}$ is positive (semi-)definite. Therefore, we obtain \begin{align*}
 &-v_1^\top N_{21}^\top \big(K\otimes \Sigma_2^{-1}\big) N_{21} v_1+2v_1^\top N_{21}^\top \big(K\otimes \Sigma_2^{-1}\big) M_{2} z-z^\top M_{2}^\top \big(K\otimes \Sigma_2^{-1}\big) M_{2} z\\
 &=-\Big\| \big(K\otimes \Sigma_2^{-1}\big)^{\frac{1}{2}} N_{21} v_1-\big(K\otimes \Sigma_2^{-1}\big)^{\frac{1}{2}} M_{2} z\Big\|_{2}^2 \leq 0                                                                                                                                                                                                   \end{align*}
finally providing \eqref{red_reach}. We show \eqref{red_obs} using partition \eqref{part_bal} when evaluating the left upper block of the second inequality in \eqref{gramPQ_diag}. This yields \begin{align*}
  L_{11}(\Sigma_1) + \sum_{i, j=1}^q N_{i, 21}^\top \Sigma_2 N_{j, 21} k_{ij}    \leq -C_1^\top C_1.                                                                                                                                                                                                                                                                                                                                                                                          \end{align*}
Since $\sum_{i, j=1}^q N_{i, 21}^\top \Sigma_2 N_{j, 21} k_{ij} =N_{21}^\top \big(K\otimes \Sigma_2\big) N_{21} \geq 0$, the proof is complete.
\end{proof}
We are ready to prove a bound for the error between the outputs of \eqref{original_system} and \eqref{rom_x00}.
\begin{thm}\label{thm_error_bound}
Let $y$ be the output of \eqref{original_system} with $x(0)=0$, for which we assume to have a balanced realization, e.g., the assumptions of Proposition \ref{prop_bal} hold true. Given the $r$-dimensional reduced system \eqref{rom_x00} with output $y_r$  and $x_r(0)=0$. Then,  for all $u\in L^2_T$, we have \begin{align*}
\left\| y-y_{r}\right\|_{L^2_T}\leq 2\gamma^\frac{1}{2}\sum_{k=r+1}^n\sigma_k \|u\|_{L^2_T}.                                                                                                                                                                                                                                                                                                                                                                                                                                                                                                                                                                                    \end{align*}
\end{thm}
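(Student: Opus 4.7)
The plan is to reduce to a one-step truncation and then telescope. By Proposition \ref{prop_red_bal}, truncating only the smallest HSV (i.e.\ passing from dimension $n$ to $n-1$) produces a reduced system that is again balanced, its Gramian being $\Sigma_1$. One can therefore iteratively drop a single coordinate (or a block of coordinates of equal HSV) at a time. If each such one-step reduction can be shown to satisfy $\|y - y_r\|_{L^2_T} \leq 2\gamma^{1/2}\sigma_n \|u\|_{L^2_T}$, then inserting $n-r$ intermediate reduced outputs and applying the triangle inequality in $L^2_T$ yields the announced bound $2\gamma^{1/2}\sum_{k=r+1}^n \sigma_k \|u\|_{L^2_T}$.

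For the one-step bound I would work in the balanced realization and assemble the augmented error system with state $\xi = \bigl[\begin{smallmatrix}x\\ x_r\end{smallmatrix}\bigr]$, driven by the common control $u$ and started from $\xi(0)=0$; its output is $e = y - y_r = Cx - C_1 x_r$, since the direct feed-through $Du$ cancels. Then I would apply Ito's product rule, in the same spirit as in the proofs of Theorem \ref{energy_est} and Proposition \ref{prop_qk_unimportatn}, to a block quadratic storage function $V(\xi) = \xi^\top \mathcal P \xi$ whose block entries are built from $\Sigma^{-1}$ and $\Sigma_1$. The precise mixed block structure is chosen so that, combining the matrix inequalities \eqref{gramPQ_diag} for the original balanced realization with \eqref{red_reach}--\eqref{red_obs} for the reduced system, the drift of $\mathbb E[V(\xi(t))]$ satisfies a dissipation inequality of the form
\begin{equation*}
\frac{d}{dt}\mathbb E[V(\xi(t))] + \mathbb E \|e(t)\|_2^2 \leq 4\gamma\sigma_n^2\,\mathbb E\|u(t)\|_2^2 .
\end{equation*}
Integrating on $[0,T]$ and using $\xi(0)=0$ and $V\geq 0$ gives $\|e\|_{L^2_T}^2 \leq 4\gamma\sigma_n^2 \|u\|_{L^2_T}^2$, i.e.\ the one-step bound.

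The hard part will be handling the controlled diffusion: Ito's formula generates cross terms involving $M_i u$ and the quadratic $\sum_{i,j} M_i^\top(\cdot)M_j k_{ij}$, and these are not absorbed by the Lyapunov-type inequalities in isolation. I would tame them by the completion-of-squares device already used in the proof of Theorem \ref{energy_est}, which introduces the auxiliary operator $U(\cdot)$ and is precisely responsible for the extra factor $\gamma^{1/2}$ compared with the deterministic classical bound. A secondary subtlety is possible multiplicity of the smallest HSV; in that case one truncates a full block of equal HSVs at once and verifies that Proposition \ref{prop_red_bal} still delivers a balanced reduced realization, so the iteration continues without change.
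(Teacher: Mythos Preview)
Your high-level architecture---reduce to a one-step truncation with $\Sigma_2=\sigma I$ and then telescope via Proposition \ref{prop_red_bal}---is exactly what the paper does. The difference lies in how the one-step bound is obtained.

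You propose a single storage function $V(\xi)=\xi^\top\mathcal P\xi$ on the stacked state $\xi=\bigl[\begin{smallmatrix}x\\x_r\end{smallmatrix}\bigr]$, with blocks assembled from $\Sigma^{-1}$ and $\Sigma_1$, and plan to invoke both the full-system inequalities \eqref{gramPQ_diag} and the reduced ones \eqref{red_reach}--\eqref{red_obs}. The paper instead works with the \emph{sum and difference} variables $x_-(t)=x(t)-\bigl[\begin{smallmatrix}x_r(t)\\0\end{smallmatrix}\bigr]$ and $x_+(t)=x(t)+\bigl[\begin{smallmatrix}x_r(t)\\0\end{smallmatrix}\bigr]$, and applies It\^o's product rule to \emph{two separate} quadratic forms: $x_-^\top\Sigma x_-$ (using only the observability inequality for the full balanced system) and $x_+^\top\Sigma^{-1}x_+$ (using only the reachability inequality for the full balanced system, in its Schur-complement form \eqref{schur_rep}). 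The reduced-system inequalities \eqref{red_reach}--\eqref{red_obs} are \emph{not} used in the one-step estimate; they enter only through the telescoping. The two estimates produce matching ``leftover'' terms that cancel precisely when $\Sigma_2=\sigma I$, yielding $\|y-y_r\|_{L^2_t}\le 2\sigma\gamma^{1/2}\|u\|_{L^2_t}$; this is where the restriction to a block of equal HSVs is genuinely needed, not merely for Proposition \ref{prop_red_bal}.

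Your completion-of-squares remark is on point---the paper handles the $M_iu$ cross terms and the quadratic $\sum_{i,j}M_i^\top(\cdot)M_jk_{ij}$ exactly by multiplying \eqref{schur_rep} with $\bigl[\begin{smallmatrix}x_+(s)\\-2u(s)\end{smallmatrix}\bigr]$, which is the analogue of the device in Theorem \ref{energy_est}. What your outline lacks is the specific choice of storage: a single block $\mathcal P$ built from $\Sigma^{-1}$ and $\Sigma_1$ does not obviously produce a dissipation inequality with the sharp constant $4\gamma\sigma_n^2$, and you do not indicate which off-diagonal blocks (if any) you would take. The $x_\pm$ decoupling with the pair $(\Sigma,\Sigma^{-1})$ is the concrete mechanism that makes the controlled-diffusion terms line up; without it, your sketch remains a plausible plan rather than a proof.
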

\begin{proof}
Below, we continue making use of the partition in \eqref{part_bal} while assuming that our original system is already balanced. We add a zero line to the reduced system \eqref{rom_x00} yielding                                                                                                                                                                                                                                                                                                                                                                                                                                                                                                                     \begin{align}\label{bal_par_minus1} 
 d\smat x_r(t)\\0\srix = &\big[A \smat x_r(t)\\0\srix  +  B u(t)-\smat 0 \\ v_0(t)\srix\big]dt
 + \sum_{i=1}^{q}\big[ N_{i} \smat x_r(t)\\0\srix+ M_i u(t) -\smat 0 \\ v_i(t)\srix\big] dw_i(t),
\end{align}
where $v_0= A_{21}x_r+B_2 u$ and $v_i=N_{i, 21} x_r + M_{i, 2} u$. We introduce $x_-(t):= x(t) -\smat x_{r}(t)\\0\srix $ and $x_+(t):= x(t) +\smat x_{r}(t)\\0\srix $. Combining \eqref{original_system} with \eqref{bal_par_minus1} gives us \begin{align*}
 dx_-(t) &= [A x_-(t)+\smat 0 \\ v_0(t)\srix]dt + \sum_{i=1}^{q}\big[ N_{i} x_-(t) + \smat 0 \\ v_i(t)\srix\big]dw_i(t),\\ 
  dx_+(t) &= [A x_+(t)+2Bu(t)-\smat 0 \\ v_0(t)\srix]dt + \sum_{i=1}^{q}\big[ N_{i} x_+(t)+2M_iu(t) - \smat 0 \\ v_i(t)\srix\big]dw_i(t).
\end{align*} 
We begin with the case of $\Sigma_2=\sigma I$, $\sigma>0$, which is the foundation for showing the error bound in full generality.
In order to proof the bound, we need to derive equations for quadratic forms of $x_-$ and $x_+$. First, Ito's product rule, see Lemma \ref{lemstochdiff}, yields \begin{align*}
 \mathbb E\left[x_-(t)^\top \Sigma x_-(t)\right]&= 2\int_0^t \mathbb E\left[x_-(s)^\top \Sigma[A x_-(s)+\smat 0 \\ v_0(s)\srix\right]ds \\
 &\quad+ \int_0^t\sum_{i, j=1}^q \mathbb E\left[\big( N_{i} x_-(s) + \smat 0 \\ v_i(s)\srix\big)^\top \Sigma \big( N_{j} x_-(s) + \smat 0 \\ v_j(s)\srix\big)\right]k_{ij} ds\\
 &=\int_0^t \mathbb E\left[x_-(s)^\top L(\Sigma) x_-(s)\right]ds+2\int_0^t \mathbb E\left[x_-(s)^\top \Sigma\smat 0 \\ v_0(s)\srix\right] ds\\
 &\quad +\int_0^t\sum_{i, j=1}^q \mathbb E\left[\big( 2 N_{i} x_-(s) + \smat 0 \\ v_i(s)\srix\big)^\top \Sigma  \smat 0 \\ v_j(s)\srix\right]k_{ij} ds.
 \end{align*}  
 We see that $x_-(s)^\top \Sigma\smat 0 \\ v_0(s)\srix= x_2(s)^\top \Sigma_2 v_0(s)$ and $\big( 2 N_{i} x_-(s) + \smat 0 \\ v_i(s)\srix\big)^\top \Sigma  \smat 0 \\ v_j(s)\srix=(2\smat N_{i, 21} & N_{i, 22}\srix (x(s) -\smat x_{r}(s)\\0\srix )+v_i(s))^\top \Sigma_2 v_j(s)=(2\smat N_{i, 21} & N_{i, 22}\srix x(s) - v_i(s)+2 M_{i, 2} u(s))^\top \Sigma_2 v_j(s)$. Consequently, inserting \eqref{gramQ} into the above identify for $\mathbb E\left[x_-(t)^\top \Sigma x_-(t)\right]$ leads to \begin{align*}
 \mathbb E\left[x_-(t)^\top \Sigma x_-(t)\right]&\leq \int_0^t -\mathbb E\left\| C x_-(s)\right\|_2^2 ds+2\int_0^t \mathbb E\left[x_2(s)^\top \Sigma_2 v_0(s)\right] ds\\
 &\quad +\int_0^t\sum_{i, j=1}^q \mathbb E\left[(2\smat N_{i, 21} & N_{i, 22}\srix x(s) - v_i(s)+2 M_{i, 2} u(s))^\top \Sigma_2 v_j(s)\right]k_{ij} ds.
 \end{align*}  
Since $C x_-= y-y_r$, we obtain that \begin{align*}
\left\|y-y_r\right\|_{L^2_t}^2  &\leq 2\int_0^t \mathbb E\left[x_2(s)^\top \Sigma_2 v_0(s)\right] ds\\
 &\quad +\int_0^t\sum_{i, j=1}^q \mathbb E\left[(2\smat N_{i, 21} & N_{i, 22}\srix x(s) - v_i(s)+2 M_{i, 2} u(s))^\top \Sigma_2 v_j(s)\right]k_{ij} ds.
 \end{align*}  
We apply the assumption $\Sigma_2=\sigma I$ and exploit that $\sum_{i, j=1}^q v_i(s)^\top v_j(s)k_{ij}\geq 0$. Hence, 
\begin{align}\label{est_first_step}
\left\|y-y_r\right\|_{L^2_t}^2  &\leq 2\sigma \int_0^t \mathbb E\left[x_2(s)^\top v_0(s)\right] ds\\ \nonumber
 &\quad +\sigma \int_0^t\sum_{i, j=1}^q \mathbb E\left[(2\smat N_{i, 21} & N_{i, 22}\srix x(s) + v_i(s)+2 M_{i, 2} u(s))^\top  v_j(s)\right]k_{ij} ds.
 \end{align}  
We aim to combine inequality \eqref{est_first_step} with a second one that we obtain from the next step. This is, finding an estimate for a quadratic form of $x_+$. Ito's product rule (compare with Lemma \ref{lemstochdiff}) leads to {\allowdisplaybreaks \begin{align*}
 &\mathbb E\left[x_+(t)^\top \Sigma^{-1} x_+(t)\right]= 2\int_0^t \mathbb E\left[x_+(s)^\top \Sigma^{-1}[A x_+(s)+2 B u(s)-\smat 0 \\ v_0(s)\srix\right]ds \\
 &+ \int_0^t\sum_{i, j=1}^q \mathbb E\left[\big( N_{i} x_+(s)+2 M_i u(s) - \smat 0 \\ v_i(s)\srix\big)^\top \Sigma^{-1} \big( N_{j} x_-(s) +2M_i u(s) - \smat 0 \\ v_j(s)\srix\big)\right]k_{ij} ds\\
 &=\int_0^t \mathbb E\left[x_+(s)^\top L(\Sigma^{-1}) x_+(s)\right]ds\\
 &\quad -2\int_0^t \mathbb E\left[x_+(s)^\top \Sigma^{-1}\smat 0 \\ v_0(s)\srix\right] ds+4\int_0^t \mathbb E\left[x_+(s)^\top \Sigma^{-1}Bu(s)\right] ds\\
 &\quad +\int_0^t\sum_{i, j=1}^q \mathbb E\left[\big( 2 N_{i} x_+(s)+ 2 M_i u(s) - \smat 0 \\ v_i(s)\srix\big)^\top \Sigma^{-1}  (2 M_i u(s) - \smat 0 \\ v_i(s)\srix)\right]k_{ij} ds\\
 &=\int_0^t \mathbb E\left[x_+(s)^\top L(\Sigma^{-1}) x_+(s)\right]ds+4\int_0^t \mathbb E\left[u(s)^\top  \sum_{i, j=1}^q M_i^\top\Sigma^{-1}M_j k_{ij} u(s)\right] ds\\
 &\quad +4\int_0^t \mathbb E\left[x_+(s)^\top  \sum_{i, j=1}^q N_i^\top\Sigma^{-1}M_j k_{ij} u(s)\right] ds +4\int_0^t \mathbb E\left[x_+(s)^\top \Sigma^{-1}Bu(s)\right] ds\\
 &\quad -\int_0^t\sum_{i, j=1}^q \mathbb E\left[\big( 2 N_{i} x_+(s)+ 4 M_i u(s) - \smat 0 \\ v_i(s)\srix\big)^\top \Sigma^{-1}   \smat 0 \\ v_i(s)\srix\right]k_{ij} ds\\
 &\quad-2\int_0^t \mathbb E\left[x_+(s)^\top \Sigma^{-1}\smat 0 \\ v_0(s)\srix\right] ds.
 \end{align*}  }
 We observe that $x_+(s)^\top \Sigma^{-1}\smat 0 \\ v_0(s)\srix=x_2(s)^\top \Sigma_2^{-1}v_0(s)$ and $(2 N_{i} x_+(s)+ 4 M_i u(s) - \smat 0 \\ v_i(s)\srix\big)^\top \Sigma^{-1}   \smat 0 \\ v_i(s)\srix=(2\smat N_{i, 21} & N_{i, 22}\srix (x(s) +\smat x_{r}(s)\\0\srix )+4M_{i, 2} u(s)-v_i(s))^\top \Sigma_2^{-1} v_j(s)=(2\smat N_{i, 21} & N_{i, 22}\srix x(s) + v_i(s)+2 M_{i, 2} u(s))^\top \Sigma_2^{-1} v_j(s)$. Using the definitions of the operators $U$ and $S$, we therefore have \begin{align*}
 \mathbb E\left[x_+(t)^\top \Sigma^{-1} x_+(t)\right]&=\mathbb E\int_0^t x_+(s)^\top L(\Sigma^{-1}) x_+(s)-4 u(s)^\top U(\Sigma^{-1})u(s)+4x_+(s)^\top S(\Sigma^{-1})u(s) ds   \\
 &\quad -\int_0^t\sum_{i, j=1}^q \mathbb E\left[(2\smat N_{i, 21} & N_{i, 22}\srix x(s) + v_i(s)+2 M_{i, 2} u(s))^\top \Sigma_2^{-1} v_j(s)\right]k_{ij} ds\\
 &\quad-2\int_0^t \mathbb E\left[x_2(s)^\top \Sigma_2^{-1}v_0(s)\right] ds+4 \gamma \int_0^t  \mathbb E\|u(s)\|_2^2 ds.                                                                                                                                                                                                                                                                                                                                                                                                                                                                                                                                                                                                                                                                                                                                                              \end{align*}
We multiply \eqref{schur_rep} with $\begin{bmatrix} x_+(s)\\ 
 -2u(s)\end{bmatrix}^\top$ from the left and with $\begin{bmatrix} x_+(s)\\ 
 -2 u(s)\end{bmatrix}$ from the right in order to obtain $ -x_+(s)^\top L(\Sigma^{-1}) x_+(s) -4 x_+(s)^\top S(\Sigma^{-1}) u(s)+ 4 u(s)^\top U(\Sigma^{-1}) u(s)  \geq 0$. This yields \begin{align*}
 4 \gamma  \|u\|_{L^2_t}^2 &\geq \sigma^{-1}\int_0^t\sum_{i, j=1}^q \mathbb E\left[(2\smat N_{i, 21} & N_{i, 22}\srix x(s) + v_i(s)+2 M_{i, 2} u(s))^\top  v_j(s)\right]k_{ij} ds\\
 &\quad+2\sigma^{-1}\int_0^t \mathbb E\left[x_2(s)^\top v_0(s)\right] ds                                                                                                                                                                                                                                                                                                                                                                                                                                                                                                                                                                                                                                                                                                                                                             \end{align*}
 exploiting that $\Sigma_2=\sigma I$. Combining this estimate with \eqref{est_first_step}, we see that \begin{align}\label{estim_sigID}
 \left\|y-y_r\right\|_{L^2_t}  &\leq 2\sigma \gamma^\frac{1}{2}  \|u\|_{L^2_t}.                                                                                                                                                                                                         \end{align}
Let us turn our attention to the general case. We can bound the $L^2_T$-error of the dimension reduction procedure as follows \begin{align*}
  \left\|y-y_{r}\right\|_{L^2_T} 
 \leq \left\|y-y_{n-1}\right\|_{L^2_T} + \sum_{i=r+1}^{n-1} \left\| y_{k}-y_{k-1}\right\|_{L^2_T},
 \end{align*}
 where $y_{k}$ is the output of the reduced model with dimension $k\in\{r, \dots, n-1\}$. By \eqref{estim_sigID} with $t=T$, we obtain $\left\|y-y_{n-1}\right\|_{L^2_T}\leq 2\sigma_n \gamma^\frac{1}{2}  \|u\|_{L^2_T}$. In the next reduction step from $y_{n-1}$ to $y_{n-2}$ we exploit Proposition \ref{prop_red_bal}. It tells us that $y_{n-1}$ is the output of a balanced system with Gramian $\diag(\sigma_1, \dots, \sigma_{n-1})$. Therefore, we can apply \eqref{estim_sigID} once more. We continue this procedure until we arrive at the reduced system of order $r$ and find in every step that $\left\| y_{k}-y_{k-1}\right\|_{L^2_T}\leq 2\sigma_k \gamma^\frac{1}{2}  \|u\|_{L^2_T}$ concluding this proof.
\end{proof}
Theorem \ref{thm_error_bound} is one of our main results and will be the basis for the error estimates that we provide for the approaches presented in Subsections \ref{approach1} and \ref{approach2}. It contains an a-priori error bound that links the truncated HSVs to the error of the dimension reduction. Let us now turn our attention to a model reduction technique that is needed for the ansatz of Subsection \ref{approach2}.

\section{Model reduction for \eqref{original_system} with $u\equiv 0$}\label{sec4}
We proceed with the uncontrolled case as the second ansatz of Section \ref{approach2} requires a dimension reduction approach for \eqref{aux_var} with $\tilde A=A$ and $\tilde N_i=N_i$. We begin with the definition of a reachability Gramian having a very different structure than the one in Definition \ref{def_gramP}.
\begin{defn}\label{def_gramP2}
Given that $u\equiv 0$, a reachability Gramian for \eqref{original_system} is a symmetric matrix $\mathbf P\geq 0$ satisfying \begin{align}\label{gramP2}
L^*(\mathbf P)+X_0 X_0^\top = 0,                                                                                                                                                       \end{align}
where $L^*$ is the adjoint operator of $L$ given by $L^*(X):= A X + X A^\top + \sum_{i, j=1}^q N_i X N_j^\top k_{ij}$ for some generic matrix $X$ of suitable dimension.
\end{defn}
Using the terminology created in \cite{bennerdammcruz}, $\mathbf P$ has the form of a type I Gramian. This is, e.g., used in \cite{bennerdamm, redmannbenner} in a control system context, whereas here we have $u\equiv 0$ and $x_0\neq 0$. Let us show why it is relevant in out setting.
\begin{prop}\label{prop_dom_x0}
Let $\mathbf P$ be as in Definition \ref{def_gramP2} and $x$ be the mean square asymptotically stable solution of \eqref{stochstatenew} with $u\equiv 0$. Moreover, let $(\mathbf p_{ k}, \mathbf \lambda_{k})$ be an associated basis of eigenvectors for $\mathbb R^n$ with the eigenvalues $\mathbf {\lambda}_{k}\geq 0$. Then,
\begin{align*}
\int_0^\infty\mathbb E \left\vert\langle x(t), \mathbf p_{k} \rangle_2\right\vert^2 dt\leq \mathbf \lambda_{k} \left\|v \right\|_2^2.
\end{align*}
\end{prop}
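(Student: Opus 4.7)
My plan is to identify the Gramian $\mathbf{P}$ with an integrated matrix second moment and then separate off the vector $v$ by a Cauchy--Schwarz argument.

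First, I introduce the matrix-valued process $X(t)\in\mathbb{R}^{n\times d}$ defined by
\[
 dX(t)=AX(t)\,dt+\sum_{i=1}^q N_i X(t)\,dw_i(t),\qquad X(0)=X_0,
\]
so that $x(t)=X(t)v$ by linearity of the SDE in the initial data. Setting $\mathcal{M}(t):=\mathbb{E}[X(t)X(t)^\top]$ and applying Ito's product rule column-by-column, exactly as in the proof of Theorem \ref{energy_est} (see also Lemma \ref{lemstochdiff}), I obtain the matrix ODE $\dot{\mathcal{M}}(t)=L^*(\mathcal{M}(t))$ with $\mathcal{M}(0)=X_0X_0^\top$. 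Mean square asymptotic stability forces $\mathcal{M}(t)$ to decay exponentially, so $\int_0^\infty\mathcal{M}(t)\,dt$ converges and integrating the ODE on $[0,\infty)$ gives $L^*\!\bigl(\int_0^\infty\mathcal{M}(t)\,dt\bigr)=-X_0X_0^\top$. Comparison with \eqref{gramP2} together with the invertibility of $L^*$ (again by stability) yields the identification $\mathbf{P}=\int_0^\infty\mathbb{E}[X(t)X(t)^\top]\,dt$.

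With this identification in hand, Cauchy--Schwarz applied to $\langle x(t),\mathbf{p}_k\rangle_2=v^\top X(t)^\top\mathbf{p}_k$ gives the pointwise bound
\[
|\langle x(t),\mathbf{p}_k\rangle_2|^2\leq\|v\|_2^2\,\mathbf{p}_k^\top X(t)X(t)^\top\mathbf{p}_k.
\]
Taking expectations, integrating over $t\in[0,\infty)$, and using $\mathbf{P}\mathbf{p}_k=\mathbf{\lambda}_k\mathbf{p}_k$ with $\|\mathbf{p}_k\|_2=1$ (which we may assume since $\mathbf{P}$ is symmetric) delivers the stated inequality.

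The main obstacle is the identification $\mathbf{P}=\int_0^\infty\mathbb{E}[X(t)X(t)^\top]\,dt$: one has to justify that $\mathcal{M}(t)\to 0$ sufficiently fast as $t\to\infty$ in order to integrate the Lyapunov ODE over the half line and discard the boundary term. Once this is handled via the exponential decay granted by mean square stability, the rest of the argument is a single application of Cauchy--Schwarz.
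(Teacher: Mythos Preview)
Your proof is correct and follows essentially the same route as the paper: both write $x(t)=X(t)v$ (the paper uses the fundamental solution $\Phi$, so $X(t)=\Phi(t)X_0$), apply Cauchy--Schwarz to split off $\|v\|_2^2$, and then identify $\int_0^\infty\mathbb{E}[X(t)X(t)^\top]\,dt$ with $\mathbf{P}$ via \eqref{gramP2}. The only difference is that the paper cites \cite{damm} for this identification, whereas you spell out the Lyapunov ODE $\dot{\mathcal{M}}=L^*(\mathcal{M})$, the exponential decay, and the uniqueness argument explicitly.
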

\begin{proof}
Let $\Phi$ be the fundamental solution of \eqref{stochstatenew}. We obtain that
 \begin{align*}
\int_0^\infty\mathbb E \left\vert\langle x(t), \mathbf p_{k} \rangle_2\right\vert^2 dt &=\int_0^\infty\mathbb E 
\left\vert \langle \Phi(t) X_0 v, \mathbf p_{k}\rangle_2 \right\vert^2 dt=\int_0^\infty \mathbb E \left\vert\langle v, X_0^\top\Phi(t)^\top \mathbf p_{k}\rangle_2 \right\vert^2 dt\\ 
&\leq 
\mathbf p_{k}^\top \int_0^\infty\mathbb E \left[\Phi(t) X_0 X_0^\top \Phi(t)^\top\right]dt\, \mathbf p_{k} \left\|v \right\|_2^2. 
\end{align*}
It is a well-known fact that $\mathbf P:= \int_0^\infty\mathbb E \left[\Phi(t) X_0 X_0^\top \Phi(t)^\top\right]dt$ solves \eqref{gramP2}, see, e.g.,  \cite{damm}. Therefore, we find that $\mathbf p_{k}^\top \int_0^\infty\mathbb E \left[\Phi(t) X_0 X_0^\top \Phi(t)^\top\right]dt\, \mathbf p_{k} =\mathbf \lambda_{k}$ which concludes the proof.
\end{proof}
\begin{remark}
Let $\bar{\mathbf P}$ be a solution of the inequality in \eqref{gramP2}. Then, we have
$L^*(\bar{\mathbf P}-\mathbf P)=-Y\leq 0$ for some $Y\geq 0$. Using the stochastic representation of this matrix equation, we see that $\bar{\mathbf P}-\mathbf P\geq 0$ allowing to replace $\mathbf P$ by $\bar{\mathbf P}$ in Proposition \ref{prop_dom_x0}.
\end{remark}
When $u\equiv 0$, there is no change in the arguments used below Definition \ref{def_gramQ}. Therefore, the observability Gramians are the same regardless of having an uncontrolled setting or not. However, for simplicity we assume that we do not use a more general inequality but rather the solution to the equation. For that reason, we consider the following definition.
\begin{defn}\label{def_gramQ2}
Given that $u\equiv 0$, an observability Gramian of system \eqref{original_system} is a symmetric matrix $\mathbf Q\geq 0$ satisfying \begin{align}\label{gramQ2}
L(\mathbf Q)+C^\top C = 0.                                                                                                                                                        \end{align}
\end{defn}
We aim to diagonalize $\mathbf P$ and $\mathbf Q$ simultaneously for the same reason explained in Section \ref{sec_BT}. The associated state space transformation is here denoted by $\mathbf T$ defining the balanced state variable $\mathbf x_b=\mathbf T x$   satisfying
 \begin{subequations}\label{original_system_trans_x0}
\begin{align}\label{stochstate_trans_x0}
             d\mathbf x_b(t)&=\mathbf A_b \mathbf x_b(t)dt+\sum_{i=1}^q \mathbf N_{i, b} \mathbf x_b(t) dw_i(t),\\ \label{output_eq_trans_x0}
            y(t) &= \mathbf C_b \mathbf  x_b(t),\quad \mathbf x_b(0)=\mathbf  X_{0, b} v, \quad t\in [0, T],
\end{align}
\end{subequations}
with the same output as \eqref{original_system} ($u\equiv 0$) and coefficients
\begin{align*}
  (\mathbf A_b, \mathbf N_{i, b}, \mathbf C_b, \mathbf X_{0, b}):=(\mathbf T A \mathbf T^{-1}, \mathbf T N_i \mathbf T^{-1},C\mathbf T^{-1}, \mathbf T X_0).
\end{align*}
Given $\mathbf P, \mathbf Q>0$, the balancing transformation $\mathbf T$ exists and has exactly the same structure like $T$ in Proposition \ref{prop_bal}. Just the underlying Gramians are different. The structure is identical because the Gramians of \eqref{original_system_trans_x0} are of the form $\mathbf P_b= \mathbf T\mathbf P\mathbf T^\top$ and $\mathbf Q_b=\mathbf T^{-\top}\mathbf Q\mathbf T^{-1}$ again. The only difference in the proof is that the result is here obtained by multiplying  \eqref{gramP2} by $\mathbf T$ from the left and with $\mathbf T^\top$ from the right. The same type of partition \begin{equation*}
  \mathbf A_b= \begin{bmatrix}{\mathbf A}_{11}& {\mathbf A}_{12}\\ 
 {\mathbf A}_{21}& {\mathbf A}_{22}\end{bmatrix},\quad \mathbf X_{0, b} = \begin{bmatrix}{\mathbf X}_{0, 1}\\ {\mathbf X}_{0, 2}\end{bmatrix},\quad \mathbf N_{i, b}= \begin{bmatrix}{\mathbf N}_{i, 11}&{\mathbf N}_{i, 12} \\ 
{\mathbf N}_{i, 21}& {\mathbf N}_{i, 22}\end{bmatrix}, \quad \mathbf C_b= \begin{bmatrix}{\mathbf C}_1 &
\mathbf C_2\end{bmatrix}
  \end{equation*}
like in \eqref{part_bal} leads to the reduced system \begin{subequations}\label{rom_u0}
\begin{align}\label{red_stochstate_u0}
             d\mathbf x_{\mathbf r}(t)&=\mathbf A_{11} \mathbf x_{\mathbf r}(t)dt+\sum_{i=1}^q \mathbf N_{i, 11} \mathbf x_{\mathbf r}(t)dw_i(t),\\ \label{red_output_eq_u0}
            \mathbf y_{\mathbf r}(t) &= \mathbf C_1 \mathbf x_{\mathbf r}(t),\quad \mathbf x_{\mathbf r}(0)=\mathbf X_{0, 1}v, \quad t\in [0, T].
\end{align}
\end{subequations} 
Here, $\mathbf x_{\mathbf r}(t)\in\mathbb R^{\mathbf r}$, where $\mathbf r$ is the number of small HSVs $\theta_i= \sqrt{\lambda_i(\mathbf P \mathbf Q)}$. The dimension of the reduced matrices in \eqref{rom_u0} is accordingly.
\begin{remark}\label{remark_ansatz2}
It is interesting to notice that the model reduction procedure to lower the dimension of the initial state dynamics is less complex than the one for the dynamics of the control. This is due to the choice of Gramian in each scenario. If $x_0=0$ but the control is present, the algorithm of Section \ref{Sec3} requires the computation of a solution $P>0$ satisfying a linear matrix inequality \eqref{gramP}. The scheme in this section, where $u\equiv 0$ and $x_0\neq 0$, uses solutions $\mathbf P$ and $\mathbf Q$ of matrix equations \eqref{gramP2} and \eqref{gramQ2}. Therefore, it is much easier to determine $\mathbf P$ and $\mathbf Q$.
We refer to \cite{damm_matrix_eq, Sim16a} for potential strategies to solve such matrix equations.\end{remark}
It is not hard to find a first error estimate between the output $y$ of \eqref{original_system} with $u\equiv 0$ and $\mathbf y_{\mathbf r}$. Using the fundamental matrix $\mathbf \Phi_{\mathbf r} $ of \eqref{rom_u0}, this is
\begin{align*}
\|y-\mathbf y_{\mathbf r}\|_{L^2_T}^2&=\int_0^T\mathbb E \left\|y(t)-\mathbf y_{\mathbf r}(t) \right\|_2^2 dt=\int_0^T\mathbb E \left\|C\Phi(t)X_0 v-\mathbf C_1 \mathbf \Phi_{\mathbf r} (t) \mathbf X_{0, 1} v \right\|_2^2 dt\\
&\leq \int_0^\infty\mathbb E \left\|C\Phi(t)X_0 v-\mathbf C_1 \mathbf \Phi_{\mathbf r} (t) \mathbf X_{0, 1} \right\|_F^2 dt \left\| v \right\|_2^2. 
\end{align*}
It remains to analyze the integral $\int_0^\infty\mathbb E \left\|C\Phi(t)X_0 v-\mathbf C_1 \mathbf \Phi_{\mathbf r} (t) \mathbf X_{0, 1} \right\|_F^2 dt$ which is finite due to the mean square asymptotic stability of \eqref{original_system}. This was done in \cite[Section 4.2]{redmannbenner}, where the integral has been expressed as a term depending on the matrix $\Theta_2= \diag(\theta_{\mathbf r +1}, \dots, \theta_n)$ of truncated HSVs. We exploit this result in the following theorem.
\begin{thm}\label{thm_error_bound2}
Let $y$ be the output of \eqref{original_system} with $u\equiv 0$ and given that the Gramians $\mathbf P$ and $\mathbf Q$ associated to this uncontrolled case are positive definite. Suppose that $\mathbf 
y_{\mathbf r}$ is the output of the $\mathbf r$-dimensional reduced system \eqref{rom_u0} with $\mathbf x_{\mathbf r}(0)=0$. Then,  for all $u\in L^2_T$, we have
\begin{align*}
 \|y-\mathbf y_{\mathbf r}\|_{L^2_T}^2\leq  \trace(\Theta_2 \mathcal W) \left\| v \right\|_2^2, 
 \end{align*}
where $\mathcal W =  \mathbf B_2 \mathbf B_2^\top+2  \hat{\mathbf P}_{2}\mathbf A_{21}^\top +\sum_{i, j=1}^q (2 \smat {\mathbf N}_{i, 21} & {\mathbf N}_{i, 22} \srix \hat{\mathbf P} -\mathbf N_{i, 21} \mathbf{P_r} )  \mathbf N_{j, 21}^\top k_{ij}$. The matrices 
   $\hat{\mathbf P}=\smat {\hat{\mathbf P}_{1}}\\ {\hat{\mathbf P}_{2}}\srix$ and  $\mathbf{P_r}$ are the solutions of\begin{align*}
\mathbf A_{11}\mathbf{P_r} + \mathbf{P_r} \mathbf A_{11}^\top + \sum_{i, j=1}^q \mathbf N_{i, 11} \mathbf{P_r}  \mathbf N_{j, 11}^\top k_{ij} &= - \mathbf X_{0, 1} \mathbf X_{0, 1}^\top,\\
    \mathbf A_b \hat{\mathbf P} + \hat{\mathbf P} \mathbf A_{11}^\top + \sum_{i, j=1}^q \mathbf N_{i, b} \hat{\mathbf P} \mathbf N_{j, 11}^\top k_{ij} &= - \mathbf X_{0, b} \mathbf X_{0, 1}^\top.
                  \end{align*}
\end{thm}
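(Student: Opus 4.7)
The plan is to build directly on the inequality derived immediately before the theorem statement,
\begin{equation*}
\|y-\mathbf y_{\mathbf r}\|_{L^2_T}^2 \leq \left(\int_0^\infty \mathbb E \left\|C\Phi(t)X_0-\mathbf C_1 \mathbf \Phi_{\mathbf r}(t)\mathbf X_{0,1}\right\|_F^2\,dt\right) \|v\|_2^2,
\end{equation*}
so the task reduces to bounding the matrix-valued integral, which is independent of $v$. By coordinate invariance, $C\Phi(t)X_0 = \mathbf C_b \mathbf \Phi_b(t) \mathbf X_{0,b}$, where $\mathbf \Phi_b$ is the fundamental solution of the balanced SDE \eqref{original_system_trans_x0}.

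First I would expand the squared Frobenius norm via $\|M_1-M_2\|_F^2 = \trace(M_1 M_1^\top) - 2\trace(M_1 M_2^\top) + \trace(M_2 M_2^\top)$, exchange trace, expectation and time integration, and identify the three resulting matrix integrals,
\begin{gather*}
\int_0^\infty \mathbb E[\mathbf \Phi_b \mathbf X_{0,b} \mathbf X_{0,b}^\top \mathbf \Phi_b^\top]\,dt = \Theta, \qquad \int_0^\infty \mathbb E[\mathbf \Phi_{\mathbf r} \mathbf X_{0,1} \mathbf X_{0,1}^\top \mathbf \Phi_{\mathbf r}^\top]\,dt = \mathbf P_r, \\
\int_0^\infty \mathbb E[\mathbf \Phi_b \mathbf X_{0,b} \mathbf X_{0,1}^\top \mathbf \Phi_{\mathbf r}^\top]\,dt = \hat{\mathbf P}.
\end{gather*}
The first two are standard: applying Ito's product rule, taking expectations, and integrating from $0$ to $\infty$ (the boundary term at infinity vanishes by mean square asymptotic stability) yields the stated Lyapunov equations. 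The cross integral is handled in the same way applied to the joint process, noting that $\mathbf \Phi_b$ and $\mathbf \Phi_{\mathbf r}$ are driven by the same Wiener process, which gives exactly the Sylvester-type equation stated for $\hat{\mathbf P}$.

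After these substitutions the integral equals $\trace(\mathbf C_b \Theta \mathbf C_b^\top) - 2\trace(\mathbf C_1 \hat{\mathbf P}^\top \mathbf C_b^\top) + \trace(\mathbf C_1 \mathbf P_r \mathbf C_1^\top)$. I would then exploit the balanced observability equation $L(\Theta) + \mathbf C_b^\top \mathbf C_b = 0$ together with the Lyapunov equations for $\Theta$, $\mathbf P_r$, and $\hat{\mathbf P}$ to eliminate the contributions proportional to the retained block $\Theta_1$. Concretely, writing $\Theta = \diag(\Theta_1,\Theta_2)$ and decomposing $\hat{\mathbf P}$ into blocks $\hat{\mathbf P}_1$, $\hat{\mathbf P}_2$, the $(1,1)$-block of the balanced reachability equation for $\Theta$ differs from the reduced equation for $\mathbf P_r$ only in the coupling through $\Theta_2$; subtracting one from the other, inserting into the trace expression, and repeatedly using cyclicity, the $\Theta_1$-contributions cancel and what survives regroups as $\Theta_2$ multiplied by exactly the combination of $\mathbf A_{21}$, $\mathbf N_{i,21}$, $\mathbf N_{i,22}$, $\mathbf X_{0,2}$, $\hat{\mathbf P}$ and $\mathbf P_r$ that defines $\mathcal W$.

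The main obstacle is the algebraic reduction in the last step: tracking the block structure of every Lyapunov solution and systematically substituting equations so that the retained-subspace contribution cancels cleanly. Since this computation has essentially already been carried out in \cite[Section 4.2]{redmannbenner} for an analogous stochastic $H_2$-type output error, where the initial-state matrix plays the role of a control matrix, the cleanest route is to invoke that result directly, once the three integrals above have been matched with the Gramians used there.
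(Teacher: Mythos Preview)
Your proposal is correct and matches the paper's approach: the paper's own proof consists solely of a reference to \cite[Section~4.2, Proposition~4.6]{redmannbenner}, noting that the argument there (for uncorrelated L\'evy noise) carries over verbatim to correlated Wiener noise. Your sketch spells out the actual mechanics of that cited computation---the Frobenius-norm expansion, the identification of the three matrix integrals as $\Theta$, $\mathbf P_r$, and $\hat{\mathbf P}$ via Lyapunov/Sylvester equations, and the block-algebra that isolates $\Theta_2$---and then, like the paper, defers the bookkeeping to the same reference.
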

\begin{proof}
We refer to Section 4.2 and in particular to Proposition 4.6 in \cite{redmannbenner} for a proof of this statement, where uncorrelated L\'evy noise was considered. The proof is identical for correlated Wiener noise.
\end{proof}
Theorem \ref{thm_error_bound2} contains a bound that involves truncated HSVs and hence links these values to the approximation error. However, in contrast to Theorem \ref{thm_error_bound}, it is not computable a-priori. We are now ready to discuss our two techniques used for reducing the dimension of \eqref{original_system}.

\section{Model reduction for general systems \eqref{original_system}}\label{sec5}

We conclude this paper with formulating error bounds for the model reduction strategies proposed in Subsections \ref{approach1} and \ref{approach2}. Here, we make use of the results established in Sections \ref{Sec3} and \ref{sec4}.

\subsection{The approach according to Subsection \ref{approach1}}

The dimension reduction procedure for \eqref{original_system} can be conducted using the approach of Section \ref{Sec3}, which first of all covers the special case of $x_0=0$. However, we can transform \eqref{original_system} into a system with a zero initial state, that is given in  \eqref{structured_changed_system}. The price that needs to be paid is the enlargement of the control space. In fact, the information concerning the initial state is moved to the new control $\tilde u(t)= \smat u(t)\\ u_0(t)v\srix$. Moreover, all control matrices are updated, meaning that we replace $B, M_i$ and $D$ by $\tilde B, \tilde M_i$ and $\tilde D$. We can now apply the model reduction technique of Section \ref{Sec3} to \eqref{structured_changed_system}. It is based on Gramians $\tilde P, Q>0$ satisfying $\tilde U(\tilde P^{-1})>0$ and \begin{align}\label{Gramians_strat1}
L(\tilde P^{-1})+\tilde S(\tilde P^{-1}) \tilde U(\tilde P^{-1})^{-1} \tilde S(\tilde P^{-1})^\top \leq 0 \quad\text{and}\quad   
L(Q)+C^\top C \leq 0,                                                                                                                                                                                                                                                                                                              \end{align}
where the updated operators are defined by
\begin{align*}
\tilde S(X) = X \tilde B + \sum_{i, j=1}^q N_i^\top X \tilde M_j k_{ij} \quad\text{and}\quad                                                                                                                                                                                                                                                                 \tilde U(X) = \tilde \gamma I - \sum_{i, j=1}^q \tilde M_i^\top X \tilde M_j k_{ij}\quad(\tilde \gamma >0).
\end{align*}
We determine the balancing transformation $\tilde T$ according to \eqref{bal_transform} leading to balanced coefficients that we partition as follows 
  \begin{align*}
\tilde T A \tilde T^{-1}&= \begin{bmatrix}{\tilde A}_{11}& \star\\ 
 \star& \star\end{bmatrix},\quad \tilde T \tilde B = \begin{bmatrix}{\tilde B}_1\\ \star\end{bmatrix}, \quad \tilde T \tilde M_{i} = \begin{bmatrix}{\tilde M}_{i, 1}\\ \star\end{bmatrix} 
 ,\\
 \tilde T N_{i} \tilde T^{-1}&= \begin{bmatrix}{\tilde N}_{i, 11}&\star \\ 
\star& \star\end{bmatrix}, \quad C \tilde T^{-1}= \begin{bmatrix}{\tilde C}_1 &
\star\end{bmatrix}.
 \end{align*}
 Consequently, the reduced system is of the form \begin{align}\label{stochstatenew_z_red2}
 \begin{split}
             d\tilde x_r(t)&=\left[\tilde A_{11} \tilde x_r(t)+\tilde B_1 \tilde u(t) \right]dt+\sum_{i=1}^q \left[\tilde N_{i, 11} \tilde x_r(t)+\tilde M_{i, 1}  \tilde u(t)\right]dw_i(t),\\ 
            \tilde y_r(t) &= \tilde C_1 \tilde x_r(t)+ \tilde D \tilde u(t), \quad \tilde x_r(0)=0.
            \end{split}
\end{align}
By Theorem \ref{thm_error_bound}, we know that the error between the original output $y$ in \eqref{output_eq} and $\tilde y_r$ can be bound as follows \begin{align*}
  \left\| y-\tilde y_{r}\right\|_{L^2_T}\leq 2\tilde \gamma^\frac{1}{2}\sum_{k=r+1}^n\tilde{ \sigma}_k \|\tilde u\|_{L^2_T},                                                                                                                                                                                                                                                                             \end{align*}
where $\tilde{\sigma}_k= \sqrt{\lambda_k(\tilde PQ)}$ are the HSVs of the transformed system \eqref{structured_changed_system}. Exploiting that $\|\tilde u\|_{L^2_T}^2=\|u\|_{L^2_T}^2+\| u_0 v\|_{L^2_T}^2$, we obtain the final error estimate that we formulate in a theorem.
\begin{thm}\label{error_bound_approach1}
Let $y$ be the  the quantity of interest in \eqref{original_system} and given that the Gramian $Q$ is positive definite. Let $\tilde y_{r}$ be the output of the reduced system \eqref{stochstatenew_z_red2}. Then, we have
 \begin{align*}
  \left\| y-\tilde y_{r}\right\|_{L^2_T}\leq 2\tilde \gamma^\frac{1}{2}\sum_{k=r+1}^n\tilde{ \sigma}_k \sqrt{\mathbb E \int_0^T \|u(t)\|_2^2 dt+\mathbb E \int_0^T \|u_0(t)\|_2^2 dt \|v\|_2^2},                                                                                                                                                                                                                                                                        \end{align*}
where $\tilde{\sigma}_k$, $k\in\{1, \dots, n\}$, are the HSVs of \eqref{structured_changed_system} with underlying Gramians solving \eqref{Gramians_strat1} as well as $u\in L^2_T$ and $v\in \mathbb R^d$. Moreover, $\tilde \gamma>0$ is a parameter charaterizing the operator $\tilde U$ and hence the Gramian $\tilde P$. The process $u_0$ taking values in $\mathbb R^{r_0\times d}$ is fixed and describes the underlying low-dimensional subspace of the auxiliary system \eqref{aux_var}.
\end{thm}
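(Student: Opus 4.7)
The plan is essentially to reduce the statement to an application of Theorem \ref{thm_error_bound}, which was already proved for systems with $x_0 = 0$. The main setup work was done in Subsection \ref{approach1}: the original system \eqref{original_system} with non-zero initial state $x_0 = X_0 v$ was rewritten, via the auxiliary variable $\tilde x$ with appropriately chosen $\tilde A, \tilde N_i$ and the change of variables $z = x - \tilde x$, into the equivalent system \eqref{structured_changed_system} which has $z(0) = 0$, enlarged input $\tilde u = \bigl[u^\top \;\; (u_0 v)^\top\bigr]^\top$, and updated coefficients $\tilde B, \tilde M_i, \tilde D$. A short calculation confirms that the output of \eqref{structured_changed_system} coincides with the original $y$: indeed $Cz + \tilde D \tilde u = C(x-\tilde x) + Du + CV_0 u_0 v = Cx + Du = y$. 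Hence $y - \tilde y_r$ is exactly the output error of the balanced-truncation reduction applied to a system of the form covered by Section \ref{Sec3}.

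First I would invoke Theorem \ref{thm_error_bound}, applied to the zero-initial-state system \eqref{structured_changed_system} with its Gramians $\tilde P, Q$ satisfying \eqref{Gramians_strat1} and its HSVs $\tilde\sigma_k = \sqrt{\lambda_k(\tilde P Q)}$. This immediately gives
\begin{equation*}
\|y - \tilde y_r\|_{L^2_T} \;\leq\; 2\,\tilde\gamma^{1/2}\sum_{k=r+1}^{n}\tilde\sigma_k\,\|\tilde u\|_{L^2_T}.
\end{equation*}
Here the parameter $\tilde\gamma$ is the one entering the operator $\tilde U$ associated with the modified control matrices.

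Second, I would unpack $\|\tilde u\|_{L^2_T}$ in terms of $u$ and $u_0 v$. By definition,
\begin{equation*}
\|\tilde u\|_{L^2_T}^2 \;=\; \mathbb E\int_0^T \Bigl\|\smat u(t)\\ u_0(t) v\srix\Bigr\|_2^2\,dt \;=\; \mathbb E\int_0^T \|u(t)\|_2^2\,dt \;+\; \mathbb E\int_0^T \|u_0(t)v\|_2^2\,dt.
\end{equation*}
For the second summand I would use the submultiplicativity of the spectral (or Frobenius) norm, $\|u_0(t) v\|_2 \leq \|u_0(t)\|_2 \|v\|_2$, so that
\begin{equation*}
\|\tilde u\|_{L^2_T}^2 \;\leq\; \mathbb E\int_0^T \|u(t)\|_2^2\,dt \;+\; \|v\|_2^2\,\mathbb E\int_0^T \|u_0(t)\|_2^2\,dt.
\end{equation*}
Substituting this bound into the estimate from the first step yields the claimed inequality.

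There is not really a genuine obstacle: the work was already done when setting up the transformation in Subsection \ref{approach1} (to get the structure right, including that the output is preserved) and in proving Theorem \ref{thm_error_bound} for the controlled-diffusion setting with zero initial state. The only small point worth being careful about is that the Gramian inequality $U(\tilde P^{-1}) > 0$ in Definition \ref{def_gramP} must be verified for the modified operator $\tilde U$ with the modified matrices $\tilde M_i$, which is guaranteed by assumption \eqref{Gramians_strat1}, and that the balancing transformation $\tilde T$ of Proposition \ref{prop_bal} is well defined, which follows from the hypothesis $Q>0$ (together with $\tilde P>0$). Hence the reduced model \eqref{stochstatenew_z_red2} is indeed produced by the procedure of Section \ref{Sec3} and Theorem \ref{thm_error_bound} applies verbatim, completing the proof.
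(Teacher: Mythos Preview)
Your proposal is correct and follows essentially the same approach as the paper: apply Theorem~\ref{thm_error_bound} to the zero-initial-state system \eqref{structured_changed_system} to obtain the bound in terms of $\|\tilde u\|_{L^2_T}$, then split $\|\tilde u\|_{L^2_T}^2 = \|u\|_{L^2_T}^2 + \|u_0 v\|_{L^2_T}^2$ and use $\|u_0(t)v\|_2 \le \|u_0(t)\|_2\|v\|_2$. If anything, your write-up is slightly more explicit than the paper's, since you spell out why the output of \eqref{structured_changed_system} coincides with $y$ and why the hypotheses $Q>0$ and $\tilde U(\tilde P^{-1})>0$ ensure that the balancing and Theorem~\ref{thm_error_bound} apply.
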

We found an a-priori error estimate for our dimension reduction procedure in Theorem \ref{error_bound_approach1}. In particular, the truncated HSVs $\tilde \sigma_{r+1}, \dots, \tilde \sigma_n$ deliver a nice criterion for the choice of a suitable $r$. Let us briefly recall two choices for $\tilde A$ and $\tilde N_i$ in \eqref{aux_var} mentioned in Subsection \ref{approach1}. First, we can fix $\tilde A=\tilde N_i=0$ yielding $r_0=d$, $V_0=X_0$ and consequently a constant $u_0$ being the identity of size $d\times d$. Therefore, we obtain $\mathbb E \int_0^T \|u_0(t)\|_2^2 dt=T$ meaning that the bound is increasing in the terminal time. This illustrates the drawback of exploiting a non-stable system \eqref{aux_var}, since the error bound explodes as $T\to \infty$. This is an indicator for a bad approximation quality on long time scales. Secondly,  we can use multiples of the identify, i.e., $\tilde A= -\alpha I$ and $\tilde N_i= \gamma_i I$, where $\gamma_i\in \mathbb R$. The parameter $\alpha$ can actually also be arbitrary, but for simplicity we set $\alpha>0$. This yields $r_0=d$, $V_0=X_0$ and $u_0(t)= \expn^{(-\alpha- 0.5 \sum_{i, j=1}^q \gamma_i \gamma_j k_{ij})t +\sum_{i=1}^q \gamma_i w_i(t)} I_{d\times d}$. We calculate that $\mathbb E\|u_0(t)\|_2^2=\expn^{\beta t}$, where $\beta= -2\alpha + \sum_{i, j=1}^q \gamma_i \gamma_j k_{ij}$. If $\beta=0$, we have the same effect as in the first case. Else, we see that $\mathbb E \int_0^T \|u_0(t)\|_2^2 dt= \frac{1}{\beta}(\expn^{\beta T}-1)$. Choosing $\gamma_i$, so that $\beta>0$ leads to an exponential growth in the error bound and therefore seems to be very non-optimal (unstable system \eqref{aux_var}). With $\beta<0$ (mean square asymptotically stable \eqref{aux_var}), a small error expression can be ensured and passing to the limit as $T\to \infty$ is possible in Theorem \ref{error_bound_approach1} if $u$ is square integrable on $\Omega\times[0, \infty)$, too.

We discuss an alternative to the reduced system  \eqref{stochstatenew_z_red2} in the next section.
  
\subsection{The approach according to Subsection \ref{approach2}}

                                                                                                                                                                              In contrast to Section \ref{approach1}, model reduction needs to applied twice when applying the alternative discussed below.  First of all, the control and secondly the initial state dynamics are tackled with a dimension reduction technique. We can simply write the output $y$ of the original system \eqref{original_system} as $y=(y-\tilde y) + \tilde y$, where $\tilde y$ is the output of \eqref{aux_var} with $\tilde A=A$ and $\tilde N_i=N_i$. Now, we can approximate the output $(y-\tilde y)$ of \eqref{sys_eq_z2} by $y_r$ associated to the reduced system \eqref{rom_x00}. This approach comes with an error bounded by an expression stated in Theorem \ref{thm_error_bound}. Moreover, we approximate $\tilde y$ by the output $\mathbf y_{\mathbf r}$ of the reduced dynamics in \eqref{rom_u0}. Here, the error is charaterized by Theorem \ref{thm_error_bound2}. In total, we obtain the following error estimate that we formulate in a theorem.                                                                                                                                                                              \begin{thm}   \label{thm_bound_ansat2}
Let $y$ be the quantity of interest in \eqref{original_system}  and $y_r$, $\mathbf y_{\mathbf r}$ the reduced order outputs of systems \eqref{rom_x00} and \eqref{rom_u0}, respectively. We assume that the Gramians $\mathbf P$ and $\mathbf Q$ associated to \eqref{original_system} with $u\equiv 0$ are positive definite. Then, we have                                                                                                                                                                         \begin{align*}                                                                                                                                                                                                                                                              \left\| y- y_{r}- \mathbf y_{\mathbf r}\right\|_{L^2_T}\leq 2\gamma^\frac{1}{2}\sum_{k=r+1}^n\sigma_k \|u\|_{L^2_T} + (\trace(\Theta_2 \mathcal W))^\frac{1}{2} \left\| v \right\|_2,                                                                                                                                                                                                                                                             \end{align*}
where $\sigma_i= \sqrt{\lambda_i(PQ)}$ are the HSVs of \eqref{original_system} with $x_0=0$ and $\Theta_2= \diag(\theta_{\mathbf r +1}, \dots, \theta_n)$ is the matrix of truncated HSVs $\theta_i= \sqrt{\lambda_i(\mathbf P \mathbf Q)}$ of \eqref{original_system} with $u\equiv 0$. The matrix $\mathcal W$ is a weight specified in Theorem \ref{thm_error_bound2},  $\gamma>0$ charaterizes the operator $U$, $u\in L^2_T$ and $v\in \mathbb R^d$.
\end{thm}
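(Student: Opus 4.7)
The plan is to decompose the total error via the triangle inequality using the identity $y = (y-\tilde y) + \tilde y$, where $\tilde y$ is the output of the auxiliary system \eqref{aux_var} corresponding to the special choice $\tilde A = A$ and $\tilde N_i = N_i$. Under this choice, the difference variable $z = x - \tilde x$ satisfies precisely system \eqref{sys_eq_z2}, whose state equation coincides with \eqref{original_system} for $x_0 = 0$ and whose output equals $y - \tilde y = Cz + Du$. Simultaneously, $\tilde y$ solves an uncontrolled instance of \eqref{original_system} with the non-zero initial state $X_0 v$. These two subsystems are precisely the targets to which the reduced models \eqref{rom_x00} and \eqref{rom_u0} correspond.

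Writing
\[ y - y_r - \mathbf{y}_{\mathbf r} = (y - \tilde y - y_r) + (\tilde y - \mathbf{y}_{\mathbf r}) \]
and applying the triangle inequality in $L^2_T$ reduces the task to bounding two independent error contributions. For the first, the pair \eqref{sys_eq_z2} and \eqref{rom_x00} sits exactly in the controlled zero-initial-state framework of Section \ref{Sec3}, so Theorem \ref{thm_error_bound} yields $\|y - \tilde y - y_r\|_{L^2_T} \leq 2\gamma^{1/2}\sum_{k=r+1}^n \sigma_k \|u\|_{L^2_T}$, with the $\sigma_k$ being the HSVs associated with the Gramians $P$ and $Q$ of \eqref{original_system} at $x_0 = 0$. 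For the second, the pair \eqref{aux_var} and \eqref{rom_u0} fits exactly the uncontrolled setting of Section \ref{sec4}, and Theorem \ref{thm_error_bound2} provides $\|\tilde y - \mathbf{y}_{\mathbf r}\|_{L^2_T} \leq (\trace(\Theta_2 \mathcal W))^{1/2}\|v\|_2$ with the weight $\mathcal W$ and truncated HSVs $\theta_i$ as specified there. Summing the two bounds produces the claimed estimate.

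Because both heavy-lifting bounds are already at our disposal, the proof involves essentially no further computation. The only point requiring care is the structural identification of each subsystem with the hypotheses of its respective theorem: one must verify that setting $\tilde A = A$ and $\tilde N_i = N_i$ genuinely decouples \eqref{original_system_z} into a controlled piece with $z(0) = 0$ and a purely uncontrolled piece carrying the initial data, with additivity at the output level. This is immediate from the definitions of \eqref{aux_var}, \eqref{sys_eq_z2}, and \eqref{original_system_z}, so no additional obstacle is expected.
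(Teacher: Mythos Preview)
Your proposal is correct and follows essentially the same approach as the paper: decompose via $y - y_r - \mathbf y_{\mathbf r} = (y - \tilde y - y_r) + (\tilde y - \mathbf y_{\mathbf r})$ with $\tilde A = A$, $\tilde N_i = N_i$, apply the triangle inequality, and invoke Theorems~\ref{thm_error_bound} and~\ref{thm_error_bound2} for the two pieces. The paper's proof is in fact slightly more terse, but the argument is identical.
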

\begin{proof}
By the triangle inequality, we obtain that \begin{align*}
 \left\| y- y_{r}- \mathbf y_{\mathbf r}\right\|_{L^2_T}\leq \left\| (y-\tilde y)- y_{r}\right\|_{L^2_T}+\left\| \tilde y- \mathbf y_{\mathbf r}\right\|_{L^2_T},
 \end{align*}
 where $\tilde y$ is the output of \eqref{aux_var} with $\tilde A=A$ and $\tilde N_i=N_i$. The statement follows by applying Theorems \ref{thm_error_bound} and \ref{thm_error_bound2}.
 \end{proof}
We can also pass to the limit as $T\to \infty$ in Theorem \ref{thm_bound_ansat2} if $\lim_{T\to\infty}  \|u\|_{L^2_T}$ exists. Moreover, notice that the reduced dimensions $r$ and $\mathbf r$ may differ. This makes sense as the potential in reducing the control and the initial state dynamics can be very different. This is an advantage of this approach as, in this regard, it is more flexible than the one of Subsection \ref{approach1}. However, a second pair of  Gramians $\mathbf P$ and $\mathbf Q$ being solutions to  \eqref{gramP2} and \eqref{gramQ2} needs to be computed. At least this second pair is easier to compute than $P$ and $Q$, see Remark \ref{remark_ansatz2}.  Another drawback in comparison to the ansatz of Subsection \ref{approach1} is that the bound cannot be computed a-priori. On the other hand, Theorem \ref{thm_bound_ansat2} shows nicely how truncated HSVs of control and initial state dynamics are related to the error of the reduction.

\appendix
\section{Supporting lemma}
Below, we introduce a lemma that is based on Ito's product rule.
\begin{lem}\label{lemstochdiff}
Suppose that $A, B_1, \ldots, B_q$ are $\mathbb R^n$-valued $(\mathcal F_t)_{t\in[0, T]}$-adapted processes with $A$ being almost surely Lebesgue integrable and $B_i$ satisfying $\mathbb E\int_0^T \|B_i(t)\|_2^2 dt<\infty$. Let
 $w=\begin{bmatrix} w_1& \ldots & w_q\end{bmatrix}^\top$ be a Wiener  process with covariance matrix $K=(k_{ij})$. If $x$ is an Ito process given by \begin{align*}
 dx(t)=A(t) dt+ B(t)dw(t)=A(t) dt+ \sum_{i=1}^q B_i(t)dw_i(t),                                                                                                                                  \end{align*}
where $B=\begin{bmatrix} B_1& \ldots & B_q\end{bmatrix}$. Then, we have \begin{align*}
 \frac{d}{dt}\mathbb E\left[x(t)^\top x(t)\right] =2 \mathbb E\left[x(t)^\top A(t)\right] + \sum_{i, j=1}^q \mathbb E\left[B_i(t)^\top B_j(t)\right]k_{ij}.                                                                                                                              \end{align*}
\end{lem}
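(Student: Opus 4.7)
The plan is to apply Ito's formula to the scalar function $\phi(x)=x^\top x=\sum_{k=1}^n x_k^2$ and then exploit the fact that the Ito integral part is a martingale so it vanishes in expectation. First I would recall the quadratic covariation structure of the driving noise: since $w$ has covariance matrix $K=(k_{ij})$, the joint quadratic covariation satisfies $d\langle w_i,w_j\rangle(t)=k_{ij}\,dt$. Consequently, for the vector Ito process $x$ the componentwise quadratic covariations are
\begin{equation*}
d\langle x_k,x_l\rangle(t)=\sum_{i,j=1}^q B_{i,k}(t)\,B_{j,l}(t)\,k_{ij}\,dt,
\end{equation*}
where $B_{i,k}$ denotes the $k$-th entry of $B_i$.

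Next I would apply Ito's formula (the product rule) to each $x_k^2$, obtaining
\begin{equation*}
d(x_k(t)^2)=2x_k(t)\,A_k(t)\,dt+2x_k(t)\sum_{i=1}^q B_{i,k}(t)\,dw_i(t)+\sum_{i,j=1}^q B_{i,k}(t)B_{j,k}(t)k_{ij}\,dt,
\end{equation*}
and then sum over $k=1,\dots,n$ to get the vectorized identity
\begin{equation*}
d(x(t)^\top x(t))=2x(t)^\top A(t)\,dt+2\sum_{i=1}^q x(t)^\top B_i(t)\,dw_i(t)+\sum_{i,j=1}^q B_i(t)^\top B_j(t)\,k_{ij}\,dt.
\end{equation*}

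I would then write this in integral form from $0$ to $t$ and take expectations. Thanks to the standing integrability assumptions on $A$ and $B_i$ (together with moment control on $x$ following from the mean square asymptotic stability in the paper's setting, or more generally from the Lipschitz/linear-growth structure guaranteeing $\sup_{s\leq T}\mathbb{E}\|x(s)\|_2^2<\infty$), the processes $s\mapsto x(s)^\top B_i(s)$ are in the space where the Ito integrals $\int_0^t x(s)^\top B_i(s)\,dw_i(s)$ are true martingales, hence have zero mean. This yields
\begin{equation*}
\mathbb{E}[x(t)^\top x(t)]-\mathbb{E}[x(0)^\top x(0)]=\int_0^t\Big(2\mathbb{E}[x(s)^\top A(s)]+\sum_{i,j=1}^q \mathbb{E}[B_i(s)^\top B_j(s)]k_{ij}\Big)ds.
\end{equation*}
Differentiating in $t$ (the integrand is continuous in $s$ in expectation under the stated assumptions, so Lebesgue differentiation applies pointwise a.e., which is sufficient for the way the lemma is used in the paper) delivers the claim.

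The only delicate point is justifying the vanishing of the Ito integral in expectation, i.e., that $x^\top B_i$ lies in the natural $L^2$-space for stochastic integration. This is where one needs the moment bound on $x$ combined with the $L^2$ assumption on $B_i$, handled by Cauchy-Schwarz; everything else is a routine bookkeeping of the Ito correction with correlated Brownian motions.
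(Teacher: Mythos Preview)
Your proof is correct and follows essentially the same route as the paper: both apply Ito's product rule to $x^\top x$, identify the quadratic covariation via $dw_i\,dw_j=k_{ij}\,dt$, and take expectations using that the Ito integral has zero mean. The only differences are cosmetic---you work componentwise on $x_k^2$ before summing, and you discuss the integrability needed for the martingale property more carefully than the paper does.
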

\begin{proof}
Ito's product rule yields $d\big(x(t)^\top x(t)\big) = d\big(x(t)^\top\big) x(t)+ x(t)^\top d\big(x(t)\big)+d\big(x(t)^\top\big) d\big(x(t)\big)$. We insert the representation of $x$ into this equation providing \begin{align*}
d\big(x(t)^\top x(t)\big)&= A(t)^\top x(t) dt+ (B(t)dw(t))^\top x(t) +  x(t)^\top  A(t) dt+ x(t)^\top B(t)dw(t) \\
&\quad+ \big(A(t)^\top dt+ \sum_{i=1}^q B_i(t)^\top dw_i(t)\big) \big(A(t) dt+\sum_{j=1}^q B_j(t) dw_j(t)\big).                                                                                                                                                                                                                    \end{align*}
 The result now follows by the rule  $dw_i(t)dw_j(t)=k_{ij}dt$ and by applying the expected value to the above identify exploiting the fact that the Ito integral has mean zero.
\end{proof}

\section*{Acknowledgments}
 MR is supported by the DFG via the individual grant ``Low-order approximations for large-scale problems arising in the context of high-dimensional
PDEs and spatially discretized SPDEs''-- project number 499366908.

\bibliographystyle{abbrv}
\bibliography{ref_nonlinear_mor}

\end{document}